\newtheorem{theorem}{Theorem}[section]
\newtheorem{lemma}[theorem]{Lemma}
\newtheorem{corollary}[theorem]{Corollary}
\newtheorem{prop}[theorem]{Proposition}
\theoremstyle{definition}
\newtheorem{definition}[theorem]{Definition}
\newtheorem{problem}[theorem]{Problem}
\newtheorem{remark}[theorem]{Remark}
\numberwithin{equation}{section}
\begin{document}
\title[]{Injective Hyperbolicity for quotients of balls and polydisks}
\author[Forn\ae ss]{J. E. Forn\ae ss}
\author[Trybula]{M. Trybula}
\author[Wold]{E. F. Wold}
\address{E. F. Wold: Department of Mathematics\\
University of Oslo\\
Postboks 1053 Blindern, NO-0316 Oslo, Norway}\email{erlendfw@math.uio.no}

%
%
\subjclass[2010]{32E20}
\date{\today}
\keywords{}

\begin{abstract}
In this article we study the injective Kobayashi metric on complex surfaces. 
\end{abstract}

\maketitle

\section{Introduction}

For complex manifolds $Y$ and $X$ we let $\mathcal O(Y,X)$ denote 
the set of holomorphic maps $f:Y\rightarrow X$, and we let 
$\mathcal O_\iota(Y,X)$ be the set of elements $f\in \mathcal O(Y,X)$
such that $f$ is injective. We let $\triangle$ denote the unit disk in the complex plane. 
\begin{definition}
Let $X$ be a complex manifold. For a point $(x,v)\in TX$ we set
\begin{equation}\label{def:kob}
\omega^X_K(x,v):=\inf\{\frac{1}{\lambda}:f\in\mathcal O(\triangle,X), f(0)=x, df(0)(1)=\lambda v\}, 
\end{equation}
and we set 
\begin{equation}\label{def:ikob}
\omega^X_\iota(x,v):=\inf\{\frac{1}{\lambda}:f\in\mathcal O_\iota(\triangle,X), f(0)=x, df(0)(1)=\lambda v\}.
\end{equation}
Then $\omega^X_K$ is the familiar (infinitesimal) Kobayashi metric, and we will call $\omega^X_\iota$ the \emph{injective Kobayashi metric}.
\end{definition}

\begin{remark}
Upon finishing a preprint of the current article we were made aware of the fact that the injective Kobayashi
metric was already introduced in \cite{Hahn}, and that it already appeared in the one-dimensional case in \cite{Siu}.
In \cite{JarnickiPflug} the corresponding object(s) are referred to as Hahn functions/metrics. Furthermore, these 
objects were studied in \cite{Jarnicki1} and \cite{Jarnicki2}, and Theorem \ref{thm:bidisk1} below may be proved 
by the methods in \cite{Jarnicki1} (where the corresponding result was proved for non-simply connected hyperbolic
domains in $\mathbb C$  - see also \cite{JarnickiPflug} Chapter 8), or as an application of the result therein.  
\end{remark}

The main problem is the following. 
\begin{problem}\label{prob:main}
For which 2-dimensional complex manifolds $X$ do we have that $\omega_\iota^X=\omega^X$?
\end{problem}

In complex dimension one, i.e., in the the case that $X$ is a Riemann surface, the corresponding problem 
is quite simple. If $X$ is hyperbolic the metrics coincide if and only if $X$ is the unit disk (see \cite{SibonyWold} where the 
injective Kobayashi metric was introduced on foliations), 
if $X=\mathbb C$ or $X=\mathbb P^1$ both metrics vanish identically, and if $X=\mathbb C^*$ or 
$X$ is a torus, the metrics are different due to the Koebe $\frac{1}{4}$-theorem. Furthermore, in complex 
dimension larger than 2, the metrics always coincide, see \cite{Overholt}. 

\medskip

In this note, we will
here give some initial results, focusing on quotients of balls and bi-disks.

\begin{theorem}\label{thm:main}
Let $\Gamma\subset\mathrm{Aut_{hol}}(\mathbb B^2)$ be a Kleinian group, set $X:=\mathbb B^2/\Gamma$,
and assume that $X$ is compact.
Then $\omega^X_K\neq\omega^X_\iota$.
\end{theorem}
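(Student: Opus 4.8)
The plan is to show that on a compact ball quotient $X = \mathbb B^2/\Gamma$ the injective Kobayashi metric is strictly larger than the ordinary one at \emph{some} tangent vector, by exploiting the fact that an injective holomorphic disk cannot be too wild while the universal cover provides Kobayashi-extremal disks that are far from injective. The starting point is the covering map $\pi : \mathbb B^2 \to X$, which is a local biholomorphic isometry for the Kobayashi (equivalently Bergman/Poincar\'e) metric, so $\omega^X_K(\pi(z), d\pi(z)w) = \omega^{\mathbb B^2}_K(z,w)$; the Kobayashi-extremal disks downstairs are exactly the projections of complex geodesics (affine disks) in $\mathbb B^2$. Since $\Gamma$ is a cocompact Kleinian group it is a nonelementary, in particular infinite and nonabelian, discrete group, and its limit set behaves so that every complex geodesic in $\mathbb B^2$ has $\Gamma$-translates accumulating on it; hence the projection of a fixed complex geodesic to $X$ is a dense, highly self-intersecting image. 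I would make this precise: for a generic affine disk $L \subset \mathbb B^2$, $\pi|_L$ is very far from injective, and moreover one can arrange (by a Baire-category / density argument using cocompactness) an affine disk whose $\Gamma$-orbit forces \emph{any} competitor disk close to it in the extremal problem to also self-intersect.

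The core of the argument is then a ``no-injective-extremal-disk'' dichotomy together with a compactness argument. Suppose for contradiction that $\omega^X_\iota = \omega^X_K$ everywhere. Fix $(x,v) \in TX$ with $v \neq 0$. By definition of $\omega^X_\iota$ there is a sequence $f_k \in \mathcal O_\iota(\triangle, X)$ with $f_k(0) = x$, $df_k(0)(1) = \lambda_k v$ and $1/\lambda_k \to \omega^X_K(x,v)$. I would lift each $f_k$ to $\widetilde f_k : \triangle \to \mathbb B^2$ with $\widetilde f_k(0) = \widetilde x$ (fixed lift of $x$); these form a normal family, and a subsequential limit $\widetilde f_\infty$ projects to a Kobayashi-extremal disk $f_\infty = \pi \circ \widetilde f_\infty$ for $(x,v)$. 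Extremality on $X$ forces $\widetilde f_\infty$ to be (up to reparametrization) the affine complex geodesic through $(\widetilde x, \widetilde v)$, i.e.\ an isometric embedding $\triangle \hookrightarrow \mathbb B^2$ onto an affine disk $L$. Now the contradiction: $f_\infty = \pi|_L$ has self-intersections coming from $\Gamma$-translates of $L$ meeting $L$, and by an open-mapping / persistence argument these transverse (or at least isolated) self-intersections of $f_\infty$ survive under small perturbation, so $f_k$ cannot be injective for $k$ large. Making the persistence quantitative is where cocompactness is essential — one needs that $L$ genuinely returns near itself, which follows because the $\Gamma$-orbit of any point is dense-ish (the injectivity radius of $X$ is finite, so $\mathrm{dist}_{\mathbb B^2}(z, \Gamma z \setminus \{z\})$ is uniformly bounded for $z$ in a fundamental domain) and because through any point and direction of $X$ there is essentially a unique extremal direction-disk, so one cannot dodge the self-intersection by choosing a different $(x,v)$.

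Concretely, I would argue at a \emph{well-chosen} $(x,v)$ rather than all of them: pick $x = \pi(\widetilde x)$ and $v$ a direction such that the affine disk $L$ through $\widetilde x$ in direction $\widetilde v$ has a point $\widetilde x$ and a nontrivial $\gamma \in \Gamma$ with $\gamma \widetilde x' \in L$ for some $\widetilde x' \in L$ close to $\widetilde x$ — this is possible by finiteness of the injectivity radius, choosing $v$ so that the geodesic disk ``wraps around''. Then $f_\infty = \pi|_L$ has $f_\infty(p) = f_\infty(q)$ for distinct nearby $p, q \in \triangle$, and since $df_\infty$ is nondegenerate at both, a Rouch\'e/implicit-function argument shows every $g \in \mathcal O(\triangle, X)$ that is $C^1$-close to $f_\infty$ on a fixed compact subdisk also has $g(p') = g(q')$ for some $p', q'$ near $p, q$; hence $g \notin \mathcal O_\iota$. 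Since the minimizing sequence $f_k$ converges in $C^\infty_{\mathrm{loc}}$ to such an $f_\infty$ (after lifting and projecting), we get $f_k \notin \mathcal O_\iota$ for large $k$, contradicting $f_k \in \mathcal O_\iota(\triangle, X)$. The main obstacle I anticipate is the rigidity/uniqueness of the extremal disk on $X$: one must rule out that the limit disk $f_\infty$, while extremal, is something other than an embedded affine disk — e.g.\ a branched or non-proper image — and here Lempert-type theory for $\mathbb B^2$ (uniqueness of complex geodesics) applied upstairs via the normal-families lift is the right tool, but transferring ``extremal on $X$'' to ``the lift is a geodesic in $\mathbb B^2$'' requires care with how the limit of $1/\lambda_k$ interacts with the covering.
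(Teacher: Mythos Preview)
Your overall architecture matches the paper's Proposition~\ref{prop:genstrat}: lift a near-extremal sequence to $\mathbb B^2$, use uniqueness of the Kobayashi-extremal disk in $\mathbb B^2$ (Schwarz lemma, as in \S\ref{extmaps:ball}) to force the limit to be an affine disk $L$, and then argue that an isolated self-intersection of $\pi|_L$ persists under perturbation via Hurwitz/Rouch\'e. The ``main obstacle'' you flag at the end is in fact not one: since $\pi$ is a local isometry, extremality of the limit downstairs immediately gives extremality of the lift, and the extremal disk in $\mathbb B^2$ is unique.

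The genuine gap is the step you pass over with ``this is possible by finiteness of the injectivity radius, choosing $v$ so that the geodesic disk wraps around.'' Finite injectivity radius gives a nontrivial $\gamma\in\Gamma$ with $\gamma\widetilde x$ close to $\widetilde x$, but it does \emph{not} place $\gamma\widetilde x$ on any prescribed affine disk through $\widetilde x$, nor does it prevent $\gamma$ from preserving the affine disk $L$ through $\widetilde x$ and $\gamma\widetilde x$ (in which case $\pi|_L$ factors through a covering and the intersection is not isolated, so persistence fails). You need an affine disk $L$ and $\gamma\in\Gamma$ with $\gamma L\cap L$ a single interior point of $\mathbb B^2$, and you have not produced one. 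The paper supplies exactly this missing geometry in two steps: first a short lemma showing that compactness of $X$ forces every element of $\Gamma$ to be hyperbolic (parabolics would create arbitrarily short nontrivial loops); second, for a hyperbolic $\phi$ put in the normal form~\eqref{ballhyp}, the horizontal line $L_\alpha=\{(z,\alpha):|z|^2<1-\alpha^2\}$ is shown by an explicit computation to meet $\phi(L_\alpha)$ in a single interior point once $r$ is close to $1$ and $\theta$ close to $0$ (achieved by replacing $\phi$ with a high iterate). That concrete calculation is the heart of the proof, and it is what your ``wraps around'' sentence must be replaced by.
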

Theorem \ref{thm:main} will be proved in Section \ref{sec:ball}. \

\medskip

The following result is essentially due to Jarnicki \cite{Jarnicki1}. 

\begin{theorem}\label{thm:bidisk1}(Jarnicki \cite{Jarnicki1})
Let $Y_1$ and $Y_2$ be compact hyperbolic Riemann surfaces, and set $X:=Y_1\times Y_2$.
Then $\omega^X_K\neq\omega^X_\iota$.
\end{theorem}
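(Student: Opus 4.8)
The plan is to exhibit one point $p=(y_1,y_2)\in X$ and one tangent vector $v=(v_1,v_2)\in T_pX\setminus\{0\}$ with $\omega^X_\iota(p,v)>\omega^X_K(p,v)$; since always $\omega^X_\iota\ge\omega^X_K$ this proves the theorem, and the construction exploits the freedom to move the two uniformizations independently. Each $Y_j$ is $\triangle/\Gamma_j$ for a torsion-free cocompact Fuchsian group $\Gamma_j<\Aut(\triangle)=\mathrm{PSL}(2,\R)$, all of whose non-identity elements are hyperbolic. The first and main step is purely Fuchsian: after conjugating $\Gamma_1$ and $\Gamma_2$ \emph{separately} one can arrange that there are $\gamma_1\in\Gamma_1\setminus\{\id\}$ and $\gamma_2\in\Gamma_2\setminus\{\id\}$ with $\delta:=\gamma_2^{-1}\gamma_1$ a \emph{non-trivial elliptic} element. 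Indeed, for any $\ell_1,\ell_2>0$ there exist hyperbolic $g_1,g_2\in\mathrm{PSL}(2,\R)$ of translation lengths $\ell_1,\ell_2$ with $g_2^{-1}g_1$ elliptic: put $g_1$ diagonal, and for each prescribed value of $\mathrm{tr}\,g_2$ of modulus $>2$ solve a single linear equation in the remaining entries of $g_2$ to make $\mathrm{tr}(g_2^{-1}g_1)=0$. Now take $\ell_i$ to be the translation length of some $\gamma_i\in\Gamma_i\setminus\{\id\}$ and conjugate $\Gamma_i$ so that $\gamma_i$ becomes $g_i$. Having fixed such $\gamma_1,\gamma_2$, let $z\in\triangle$ be the fixed point of $\delta$ and put $w:=\gamma_1z=\gamma_2z$; since $\gamma_1$ is hyperbolic, $w\ne z$, and since $\delta$ is a non-trivial rotation about $z$ we have $\delta'(z)\ne 1$, hence, differentiating $\gamma_1=\gamma_2\delta$ at $z$, $\gamma_1'(z)=\gamma_2'(z)\delta'(z)\ne\gamma_2'(z)$.

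Next I would let $\pi_j:\triangle\to Y_j$ be the covering maps obtained above and set $y_j:=\pi_j(0)$, $v_j:=d\pi_j(0)(1)\ne 0$, $p:=(y_1,y_2)$, $v:=(v_1,v_2)$. Lifting an arbitrary competitor to $\triangle$ and applying the Schwarz lemma gives $\omega^{Y_j}_K(y_j,v_j)=1$ (attained by $\pi_j$), and testing $\omega^X_K(p,v)$ against the competitor $(\pi_1,\pi_2)$ and against the projection $X\to Y_1$ then gives $\omega^X_K(p,v)=1$. Suppose for contradiction $\omega^X_\iota(p,v)=1$, and pick injective holomorphic $f_n=(g_n,h_n):\triangle\to X$ with $f_n(0)=p$, $df_n(0)(1)=\lambda_n v$, $\lambda_n\to 1$ (necessarily $\lambda_n\le 1$). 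Writing $g_n=\pi_1\circ\widetilde g_n$ with $\widetilde g_n(0)=0$ and comparing derivatives forces $\widetilde g_n'(0)=\lambda_n$, so $\widetilde g_n\to\id_\triangle$ locally uniformly by the Schwarz lemma, whence $g_n\to\pi_1$ and likewise $h_n\to\pi_2$, locally uniformly on $\triangle$.

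Finally I would localize near the two points $z$ and $w$. Choose $\e>0$ with $\overline{\D}(z,\e)\cap\overline{\D}(w,\e)=\emptyset$, both disks inside $\triangle$, and holomorphic coordinates $\phi_1$ near $\pi_1(z)=\pi_1(w)$ and $\phi_2$ near $\pi_2(z)=\pi_2(w)$; for large $n$ define the holomorphic map
\[
F_n(\zeta_1,\zeta_2):=\bigl(\phi_1(g_n(\zeta_1))-\phi_1(g_n(\zeta_2)),\ \phi_2(h_n(\zeta_1))-\phi_2(h_n(\zeta_2))\bigr)
\]
on $\overline{\D}(z,\e)\times\overline{\D}(w,\e)$. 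It converges uniformly to the analogous map $F_\infty$ built from $\pi_1,\pi_2$; since $\pi_j\circ\gamma_j=\pi_j$ gives $\pi_j(w)=\pi_j(z)$ we have $F_\infty(z,w)=0$, and differentiating the same relations at $z$ gives
\[
\det DF_\infty(z,w)=c\bigl(\gamma_1'(z)^{-1}-\gamma_2'(z)^{-1}\bigr)\ne 0
\]
for a nonzero constant $c$, using the last observation of the first paragraph. By the argument principle (Rouché/Hurwitz) for holomorphic maps $\C^2\to\C^2$, for all large $n$ the map $F_n$ has a zero $(\zeta_1^{(n)},\zeta_2^{(n)})$ near $(z,w)$, hence with $\zeta_1^{(n)}\ne\zeta_2^{(n)}$; injectivity of $\phi_1,\phi_2$ then turns $F_n(\zeta_1^{(n)},\zeta_2^{(n)})=0$ into $g_n(\zeta_1^{(n)})=g_n(\zeta_2^{(n)})$ and $h_n(\zeta_1^{(n)})=h_n(\zeta_2^{(n)})$, i.e.\ $f_n(\zeta_1^{(n)})=f_n(\zeta_2^{(n)})$, contradicting injectivity of $f_n$. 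Therefore $\omega^X_\iota(p,v)>1=\omega^X_K(p,v)$.

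I expect the main obstacle to be the Fuchsian step: producing, after independently repositioning the two uniformizations, a pair $(\gamma_1,\gamma_2)$ with elliptic quotient $\delta$ is exactly what forces the product covering map $(\pi_1,\pi_2)$ to have a \emph{transverse} (first-order) self-intersection, and transversality is precisely what lets that self-intersection persist under the $C^0$-small, a priori injective perturbations $f_n$; a merely tangential self-intersection could be resolved by perturbation. Everything downstream — normal families, the Schwarz lemma, the chart computation, and the multidimensional argument principle — is routine.
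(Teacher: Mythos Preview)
Your proof is correct and follows the same overall strategy as the paper: the map $\zeta\mapsto(\pi_1(\zeta),\pi_2(\zeta))$ is, upstairs, the diagonal in $\triangle^2$, which is the \emph{unique} extremal disk through the chosen point in the chosen direction (cf.\ Section~\ref{extmaps:bidisk}); one then arranges a self-intersection of its image in $X$ and uses a Hurwitz-type stability argument to force the self-intersection onto nearby injective competitors, a contradiction. The implementations differ in two places. For the Fuchsian step the paper (see the proof of Theorem~\ref{annuli}, Proposition~\ref{increase}, and Lemma~\ref{diag}) uses the monotone growth of $s\mapsto d_K(se^{i\theta},\varphi(se^{i\theta}))$ to produce a conjugate $\tilde\varphi_1$ with $\tilde\varphi_1(0)=\varphi_2(0)$ but $\tilde\varphi_1\neq\varphi_2$; your condition ``$\delta=\gamma_2^{-1}\gamma_1$ non-trivially elliptic'' is equivalent to this (an element of $\Aut(\triangle)$ fixing an interior point is elliptic), and your trace computation in $\mathrm{SL}(2,\R)$ is a slicker route to it in the cocompact case, bypassing the length analysis. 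For the perturbation step the paper packages the argument as Proposition~\ref{prop:genstrat}, writing one local branch as a graph and applying one-variable Hurwitz to $H\circ G_2$; this only requires the intersection to be isolated. Your two-variable Rouch\'e argument instead uses genuine transversality, which you get for free from $\delta'(z)\neq 1$. Either version suffices here.
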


The last theorem will be a consequence of a more general result proved in Section \ref{sec:bidisk}, where 
we will also construct non-trivial examples where the two metrics coincide.

\section{Preliminaries}

\subsection{Definitions}

Throughout this article $\triangle$ will denote the unit disk in the complex plane $\mathbb C$, and
$\triangle^2$ will denote the unit bidisk in $\mathbb C^2$. For any domain $\Omega\subset\mathbb C^n$
we let $b\Omega$ denote its topological boundary.  For a point $p\in\mathbb C^2$ and $\delta>0$, 
we let $B_\delta(p)$ denote the ball of radius $\delta$ centred at $p$.  \

Recall that for $X=\triangle$ we have that 
$$
\omega_K^\triangle(z,v)=\omega_P(z,v)=\frac{|v|}{1-|z|^2},
$$
where $\omega_P$ denotes the Poincar\'{e} metric. Equipped with this metric, the 
holomorphic automorphism group $\mathrm{Aut_{hol}}(\triangle)$ of the unit disk, is 
the group of orientation preserving isometries of $\triangle$, and any Riemann surface
of hyperbolic type is the quotient of $\triangle$ by a Fuchsian sub-group $\Gamma$.

\begin{definition}
Let $\Gamma\subset\mathrm{Aut_{hol}}(\triangle)$ be a sub-group. 
We til call $\Gamma$ a \emph{Fuchsian group} if $\Gamma$ acts properly discontinuously on $\triangle$, i.e.,
if for every point $z\in\mathbb\triangle$ there is an open set $U$ containing $z$ such that 
if $\phi\in\Gamma$ and if $\phi(U)\cap U\neq\emptyset$, then $\phi=\mathrm{id}$. If $\triangle$
is replaced by $\mathbb B^2$ or $\triangle^2$ we will call such a group a Kleinian group. 
\end{definition}

Recall that any element $\phi\in\mathrm{Aut_{hol}}(\triangle^2)$ is of the form 
$\phi=(\varphi_1,\varphi_2)$ with $\varphi_j\in\mathrm{Aut_{hol}}(\triangle)$.
The elements  $\varphi\in\mathrm{Aut_{hol}}(\triangle)$ are classified into 
one of three types: 
\begin{itemize}
\item[(1)] \emph{hyperbolic} if $\varphi$ has precisely two distinct fixed points on $\overline\triangle$,
and both are contained in $b\triangle$,  
\item[(2)] \emph{parabolic} if  $\varphi$ has precisely one fixed points on $\overline\triangle$, and it 
is contained in $b\triangle$, 
\item[(3)] \emph{elliptic} if  $\varphi$ has precisely one fixed points on $\overline\triangle$, and it 
is contained in $\triangle$.
\end{itemize}
Elements of $\mathrm{Aut_{hol}}(\mathbb B^2)$ are classified correspondingly, were in (1)-(3) we
replace $\triangle$ by $\mathbb B^2$. 

\medskip

Clearly, a Kleinian/Fuchsian group cannot contain any elements of elliptic type, so we will here consider 
hyperbolic and parabolic automorphisms. Recall that a hyperbolic automorphism of $\triangle$
is conjugate to an automorphism
\begin{equation}\label{diskhyp}
\varphi(z) = \frac{z+r}{1+rz}
\end{equation}
with $0<r<1$. For parabolic automorphisms it is more convenient to identify $\triangle$ with 
the upper half plane $H$, and there any parabolic automorphism is conjugate to 
\begin{equation}\label{halfplanepara}
\mbox{either }\varphi^+(z) = z+1 \mbox{ or } \varphi^-(z) = z-1.
\end{equation}
In dimension two, any hyperbolic automorphism of $\mathbb B^2$ is conjugate to an
automorphism 
\begin{equation}\label{ballhyp}
\phi(z,w) = (\frac{z+r}{1+rz}, e^{i\theta}\frac{\sqrt{1-r^2}}{1+rz}w)
\end{equation}
for $0<r<1$ and $\theta\in [0,2\pi)$. For parabolic automorphisms it is more convenient to identify
$\mathbb B^2$ with the Siegel upper half plane 
$$
H_2 = \{(z,w)\in\mathbb C^2:\mathrm{Im}(w)>|z|^2\}.
$$
In this case, any parabolic automorphism of $H_2$ is conjugate (after possibly passing to inverses) to one of the following two types 
\begin{equation}\label{siegelpar1}
 \phi(z,w)=(e^{i\theta}z,w+1) 
\end{equation}
or 
\begin{equation}\label{siegelpar2}
\phi(z,w)=(z-i,w-2z + i). 
\end{equation}

\medskip
%

\medskip

\subsection{Extremal maps in $\mathbb B^2$.}\label{extmaps:ball}

For a point $p\in\mathbb B^2$ and a tangent vector $v$, and extremal map $f:\triangle\rightarrow\mathbb B^2$
is a map such that $f(0)=p$, and $\omega_K(p,v)=\frac{1}{|f'(0)|}$. 
If we want to determine all extremal maps for a point $p$, since $\mathrm{Aut_{hol}}(\mathbb B^2)$
acts transitively on $\mathbb B^2$, it suffices to consider the case $p=0$. And since the isotropy group at the origin acts transitively on directions, it suffices to consider the case $v=(1,0)$. Then using Schwarz Lemma, it follows that
the map $f(z)=(z,0)$ is extremal, and furthermore that $f$ is the unique extremal map for $\omega_K(0,v)$.

\subsection{Extremal maps in $\triangle^2$.}\label{extmaps:bidisk} For any point $z\in\triangle$ and any vector $v\in\mathbb C^2$, 
it follows by Montel's Theorem that there exists a map $f:\triangle\rightarrow\triangle^2$
such that $f(0)=z$ and $\omega^{\triangle^2}_K(z,v)=\frac{1}{|f'(0)|}$. Consider 
such extremal maps for $z=0$ and $v=(1,\xi)$ with $|\xi|\leq 1$. Then 
a natural candidate for an extremal map is the map $f(z)=(z,z\cdot\xi)$. This map 
clearly has a left inverse $\psi:\triangle^2\rightarrow\triangle$, namely $\psi(z_1,z_2)=z_1$, 
and using this it is clear from the Schwarz Lemma that $f$ indeed is an extremal map. 
Moreover, if $|\xi|=1$ the map $f$ is the unique extremal map, which can be seen 
by applying Schwarz Lemma after projecting to the diagonal.

\section{The general strategy}\label{sec:strategy}

\begin{prop}\label{prop:genstrat}
Let $\Omega\subset\mathbb C^2$ be a bounded domain, let $X$ be a complex manifold, and suppose that $\pi:\Omega\rightarrow X$
is a holomorphic covering map. Assume that $f:\triangle\rightarrow\Omega$ is a proper holomorphic embedding such that 
$f$ is a unique extremal map for the Kobayashi metric through the point $p=f(0)$ with tangent vector $v=f'(0)$. Assume further that there is a point 
$q\in Z=f(\triangle)$ and a $\delta>0$ such that 
$$
\pi(B_\delta(p)\cap Z)\cap \pi(B_\delta(q)\cap Z)=\pi(p)=\pi(q).
$$
Then $\omega^X_\iota(p,v_*)>\omega^X_K(p,v_*)$ where $v_*=\pi_*v$. Consequently $\omega^X_\iota\neq\omega^X_K$. 
\end{prop}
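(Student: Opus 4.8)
The plan is to argue by contradiction, exploiting the uniqueness of the lifted extremal map together with the local separation hypothesis on $Z$. Suppose that $\omega^X_\iota(p,v_*)=\omega^X_K(p,v_*)$. Since $\pi:\Omega\to X$ is a holomorphic covering map, the map $f:\triangle\to\Omega$ descends to $g:=\pi\circ f:\triangle\to X$, and $g$ satisfies $g(0)=\pi(p)$, $g'(0)=\pi_*v=v_*$; moreover $g$ is extremal for $\omega^X_K$, since a covering map is a local biholomorphism and hence an infinitesimal isometry for the Kobayashi metric (the covering does not increase the Kobayashi metric, and $f$ being extremal in $\Omega$ forces $\omega^X_K(p,v_*)=\omega^\Omega_K(p,v)=1/|f'(0)|$). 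Thus $\omega^X_K(p,v_*)=1/|g'(0)|$.

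Next I would use the assumed equality to produce an \emph{injective} extremal map. If $\omega^X_\iota(p,v_*)=\omega^X_K(p,v_*)$, then by definition of $\omega^X_\iota$ there is a sequence of injective holomorphic maps $h_n:\triangle\to X$ with $h_n(0)=\pi(p)$ and $h_n'(0)=\lambda_n v_*$ with $1/\lambda_n\to\omega^X_K(p,v_*)$. A normal families argument (the target $X$ is covered by the bounded domain $\Omega$, hence is Kobayashi hyperbolic, and one works with lifts through a fixed point over $\pi(p)$, or directly with the fact that the $h_n$ are uniformly bounded after lifting to $\Omega$) produces a limit $h:\triangle\to X$ which is again extremal: $h(0)=\pi(p)$, $h'(0)=\lambda v_*$ with $1/\lambda=\omega^X_K(p,v_*)$. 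Now lift $h$ through the covering $\pi$: choosing the lift $\tilde h:\triangle\to\Omega$ with $\tilde h(0)=p$, we get $\tilde h(0)=p$, $\tilde h'(0)=\lambda v/|?|$ — more precisely $\pi_*(\tilde h'(0))=h'(0)=\lambda v_*=\lambda\pi_* v$, and since $\pi$ is a local biholomorphism near $p$ this gives $\tilde h'(0)=\lambda v$. Hence $\tilde h$ is extremal for $\omega^\Omega_K(p,v)$, and by the \emph{uniqueness} hypothesis $\tilde h=f$ (after the standard reparametrization normalizing $\lambda=1$, since extremal maps are unique only up to rotation of the disk, and here the direction is pinned). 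Therefore $h=\pi\circ f=g$.

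Finally I would derive the contradiction with injectivity. We have shown $h=g=\pi\circ f$, so the limit of the injective maps $h_n$ is $g$. But the separation hypothesis says precisely that $g$ is \emph{not} injective in a controlled way: pick the two points $0=f^{-1}(p)$ and $t_0=f^{-1}(q)$ in $\triangle$ (distinct, since $q\in Z$ and one may arrange $q\neq p$; if $q=p$ one instead uses that $f$ is proper and $Z$ meets $\pi^{-1}(\pi(p))$ in more than one point, so some other $t_0$ maps under $g$ to $\pi(p)$). Then $g(0)=\pi(p)=\pi(q)=g(t_0)$ while $g'(0)=v_*\neq 0$, so $g$ is non-constant with a genuine double value. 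One now has to upgrade ``$g$ has a double value'' to ``maps $C^1$-close to $g$ on the relevant region are non-injective'': this is where I expect the main technical obstacle. The clean way is Hurwitz/Rouch\'e applied to the map $z\mapsto g(z)-g(t_0)$ composed into a coordinate chart — but since $X$ is a surface one must instead argue on a suitable local slice, or use that near $q$ the set $Z$ provides a one-dimensional transversal and the condition $\pi(B_\delta(p)\cap Z)\cap\pi(B_\delta(q)\cap Z)=\{\pi(p)\}$ pins down exactly which branch identifications persist. The role of the quantitative $\delta$ and the properness of the embedding $f$ is to guarantee that this double point cannot be pushed off to the boundary of $\triangle$ under small perturbations, so that for $n$ large $h_n$ inherits a pair of points with equal image, contradicting injectivity of $h_n$. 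This contradiction establishes $\omega^X_\iota(p,v_*)>\omega^X_K(p,v_*)$, and in particular $\omega^X_\iota\neq\omega^X_K$.
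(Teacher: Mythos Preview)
Your argument is essentially the paper's: lift near-extremal competitors to $\Omega$, use uniqueness of $f$ to force the lifts to converge to $f$, and then invoke stability of the isolated self-intersection of $g=\pi\circ f$ at $\pi(p)=\pi(q)$ to contradict injectivity of the approximants. The step you flag as the ``main technical obstacle'' is handled exactly along the lines you suggest: in local coordinates near $\pi(p)$ the paper writes one branch as a graph $G_1(z)=(g(z),h(g(z)))$, sets $H(z_1,z_2)=z_2-h(z_1)$, and notes that the hypothesis $\pi(B_\delta(p)\cap Z)\cap\pi(B_\delta(q)\cap Z)=\{\pi(p)\}$ means $H\circ G_2$ has an \emph{isolated} zero at the origin, so Hurwitz applies directly to the perturbed pair $H_k\circ G_2^k$.
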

\begin{proof}
We will show first that if $\tilde f:\triangle\rightarrow\Omega$ is sufficiently close to 
$f$, then $\pi\circ\tilde f:\triangle\rightarrow X$ is not injective. 
Set $a=f^{-1}(q)$ and fix $\epsilon>0$ such that $\pi(f(B_\epsilon(0)))\cap\pi(f(B_\epsilon(a)))=\pi(p)$.
Define $G_j:\triangle_\epsilon\rightarrow X$ by $G_1(z)=\pi(f(z))$, and $G_2(z)=\pi(f(z+a))$.
We will show that if $\tilde G_j$ is sufficiently close to $G_j$ for $j=1,2$, then 
$G_1(\triangle_\epsilon)\cap G_2(\triangle_\epsilon)\neq\emptyset$. By passing to a local coordinate 
chart, and possibly having to decrease $\epsilon$, we may assume that $G_j(\triangle_\epsilon)\subset\mathbb C^2$, $G_j(0)=0$,
and $dG_1(0)(1)=(1,0)$, and further that $G_1(z)=(g(z), h(g(z)))$. Setting 
$H(z_1,z_2)=z_2-h(z_1)$ we now have that the function $H(G_2(z))$ has an isolated zero at the origin
on $\triangle_{\epsilon/2}$. \

Now suppose that $G^k_j\rightarrow G_j$ uniformly as $k\rightarrow\infty$ on $\triangle_\epsilon$ for $j=1,2$.
Then for sufficiently large $k$ we may write $G^k_1(z)=(g_k(z),h_k(g_k(z)))$, and we have that 
$g_k\rightarrow g, h_k\rightarrow h$, and so setting $H_k(z)=z_2-h_k(z_1)$ we have that 
$H_k\rightarrow H$ uniformly as $k\rightarrow\infty$. Then $H_k\circ G^k_2\rightarrow H\circ G_2$
uniformly as $k\rightarrow\infty$, and so by Hurwitz' Theorem $H_k\circ G^2_k$ has a zero for 
sufficiently large $k$.  This means precisely that the images of the two discs intersect.  \

To finish the proof of the proposition, let $\tilde f_i:\triangle\rightarrow X$ be a sequence of holomorphic maps 
with $f(0)=\pi(p),d\tilde f_i(0)(1)=\lambda_i^{-1}\cdot v_*$, and such that $\lambda_i\rightarrow \lambda=\omega^X_K(\pi(p),v_*)$.
Letting $f_j:\triangle\rightarrow\Omega$ be liftings such that $\tilde f_j=\pi\circ f_j$, we get by uniqueness of 
$f$ that $f_i\rightarrow f$, and by our previous conclusion we have that $f_i$ is not injective for sufficiently large 
$i$. 

\end{proof}

\section{Quotients of the unit ball}\label{sec:ball}

The simplest situation where we in our context can find extremal maps to apply Proposition \ref{prop:genstrat}, 
is found where we consider a Kleinian subgroup $\Gamma\subset\mathrm{Aut_{hol}}(\mathbb B^2)$ which 
contains at least one hyperbolic element.

\begin{theorem}\label{thm:hypelm}
Let $\Gamma\subset\mathrm{Aut_{hol}}(\mathbb B^2)$ be a Kleinian group, set $X:=\mathbb B^2/\Gamma$, 
and assume that $\Gamma$ contains at least one hyperbolic element. 
Then $\omega^X_K\neq\omega^X_\iota$.
\end{theorem}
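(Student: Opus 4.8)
The plan is to apply Proposition \ref{prop:genstrat} with $\Omega=\mathbb B^2$ and $\pi:\mathbb B^2\to X$ the covering map. By Section \ref{extmaps:ball} we already have a supply of unique extremal maps: after conjugating by an automorphism, the map $f(z)=(z,0)$ is the unique extremal map through $0$ in the direction $(1,0)$, and its image is the geodesic disk $Z=\triangle\times\{0\}$, which is a proper holomorphic embedding of $\triangle$ into $\mathbb B^2$. So the only thing that needs to be arranged is the separation condition: we must find a point $q\in Z$ and a $\delta>0$ with $\pi(B_\delta(p)\cap Z)\cap\pi(B_\delta(q)\cap Z)=\pi(p)=\pi(q)$, i.e. two distinct points of the \emph{same} $\Gamma$-orbit lying on a common geodesic disk, in such a way that small neighborhoods of them inside that disk are not identified except at $p,q$ themselves.

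First I would use the hyperbolic element. Let $\gamma\in\Gamma$ be hyperbolic; after conjugating $\Gamma$ by an element of $\mathrm{Aut_{hol}}(\mathbb B^2)$ we may assume $\gamma$ is in the normal form (\ref{ballhyp}), so $\gamma(z,w)=\bigl(\frac{z+r}{1+rz},\,e^{i\theta}\frac{\sqrt{1-r^2}}{1+rz}w\bigr)$ with $0<r<1$. The crucial observation is that the coordinate disk $Z=\triangle\times\{0\}$ is \emph{invariant} under $\gamma$: indeed $\gamma(z,0)=\bigl(\frac{z+r}{1+rz},0\bigr)\in Z$, and on $Z$ the automorphism $\gamma$ acts as the hyperbolic disk automorphism (\ref{diskhyp}). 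Now choose $p=(0,0)\in Z$ and $q=\gamma(p)=(r,0)\in Z$; these lie in the same $\Gamma$-orbit and $q\ne p$ since $r>0$. Because $\gamma$ restricted to $Z$ is the fixed-point-free hyperbolic automorphism $z\mapsto\frac{z+r}{1+rz}$ of $\triangle$, and $Z$ with the induced metric is just the Poincaré disk, $p$ and $q$ are at positive Poincaré distance on $Z$; moreover $\gamma$ is the unique nontrivial element of $\Gamma$ carrying a neighborhood of $p$ in $Z$ near $q$ — this is where proper discontinuity of $\Gamma$ on $\mathbb B^2$ (hence on $Z$, after a short argument) enters.

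Next I would verify the separation statement quantitatively. Since $\Gamma$ acts properly discontinuously on $\mathbb B^2$, there is an $\epsilon_0>0$ so that $\phi(B_{\epsilon_0}(p))\cap B_{\epsilon_0}(p)=\emptyset$ for every $\phi\in\Gamma\setminus\{\mathrm{id}\}$ except possibly for elements fixing $p$ — but a Kleinian group contains no elliptic elements and hence no nontrivial element fixing an interior point, so in fact $\phi(B_{\epsilon_0}(p))\cap B_{\epsilon_0}(p)=\emptyset$ for all $\phi\ne\mathrm{id}$. The same applies at $q$ with some $\epsilon_1>0$. Taking $\delta=\tfrac12\min\{\epsilon_0,\epsilon_1,\,d(p,q)\}$, where $d$ is the Euclidean distance, I claim $\pi(B_\delta(p)\cap Z)\cap\pi(B_\delta(q)\cap Z)$ consists only of $\pi(p)=\pi(q)$: if $\pi(x)=\pi(y)$ with $x\in B_\delta(p)$, $y\in B_\delta(q)$, then $y=\phi(x)$ for some $\phi\in\Gamma$; if $\phi=\mathrm{id}$ then $x=y\in B_\delta(p)\cap B_\delta(q)=\emptyset$ unless $x=p=q$ — so that case is excluded once we note $p\neq q$; and if $\phi\neq\mathrm{id}$, then comparing with the discontinuity radii and noting $\phi^{-1}=\gamma^{-1}$ or another word, one pins down $\phi=\gamma$ by a standard "only one group element can move a small ball a bounded distance" argument, giving $x=p$, $y=q$. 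A clean way to organize this: pass to the quotient Riemann surface $Z/\mathrm{Stab}_\Gamma(Z)$ (noting $\gamma\in\mathrm{Stab}_\Gamma(Z)$) and run the one-dimensional version of the argument there, which is immediate because $p$ and $q$ project to the same point of a compact (or at least hyperbolic) surface and the two "petals" of $Z$ near them are disjoint by injectivity-radius considerations.

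With the hypotheses of Proposition \ref{prop:genstrat} verified — $f$ a proper holomorphic embedding, $f$ the unique Kobayashi-extremal map through $p$ in direction $v=(1,0)$ (Section \ref{extmaps:ball}), and the separation condition at $p$ and $q=\gamma(p)$ — the proposition yields $\omega^X_\iota(\pi(p),v_*)>\omega^X_K(\pi(p),v_*)$ with $v_*=\pi_*v$, hence $\omega^X_K\neq\omega^X_\iota$, completing the proof. The main obstacle, and the only place requiring real care, is the separation condition: one has to be sure that no \emph{other} element of $\Gamma$ — in particular no element not stabilizing $Z$, which could fold a piece of $Z$ near $p$ onto a piece of $Z$ near $q$ through the ambient $\mathbb B^2$ — creates an unwanted identification; proper discontinuity on $\mathbb B^2$ together with the freeness forced by the absence of elliptic elements is exactly what rules this out, and shrinking $\delta$ below all the relevant injectivity radii makes it rigorous.
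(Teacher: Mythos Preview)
Your argument has a genuine gap: the separation condition in Proposition~\ref{prop:genstrat} \emph{fails} for the disk $Z=\triangle\times\{0\}$, precisely because $\gamma$ leaves $Z$ invariant. Indeed, for any $x=(x_1,0)\in B_\delta(p)\cap Z$ we have $\gamma(x)=\bigl(\tfrac{x_1+r}{1+rx_1},0\bigr)\in Z$, and this point lies in $B_{\delta'}(q)\cap Z$ for a suitable $\delta'$. Hence $\pi(B_\delta(p)\cap Z)$ and $\pi(B_\delta(q)\cap Z)$ are not just tangent at $\pi(p)$ --- they are \emph{the same germ}: $\pi|_Z$ is a covering onto the smooth embedded annulus $Z/\mathrm{Stab}_\Gamma(Z)\hookrightarrow X$. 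Your sentence ``one pins down $\phi=\gamma$ \dots\ giving $x=p$, $y=q$'' is where the error sits: identifying $\phi=\gamma$ is fine, but that in no way forces $x=p$. If you trace through the proof of Proposition~\ref{prop:genstrat}, the function $H\circ G_2$ is then identically zero rather than having an isolated zero, and the Hurwitz argument collapses. (The paper itself runs into the analogous obstruction in the parabolic case $\theta=0$ of Theorem~\ref{2fuchsian}, and there the two metrics actually \emph{coincide} --- so this is not a technicality.)

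The paper's proof avoids this by choosing an extremal disk that is \emph{not} $\phi$-invariant: it takes the horizontal slice $L_\alpha=\{(z,\alpha):|z|^2<1-\alpha^2\}$ for a small $\alpha>0$. Then $\phi(L_\alpha)$ is a different affine complex line, and the two meet transversally at a single point $(z_0,\alpha)$. A short computation (together with passing to a high iterate of $\phi$ so that $r$ is close to $1$ and $e^{i\theta}$ is close to $1$) shows this intersection point lies inside $\mathbb B^2$, and now Proposition~\ref{prop:genstrat} applies cleanly. So the missing idea is exactly this tilt off the invariant axis.
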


\begin{proof}
After conjugation we may achieve that a 
hyperbolic element is of the form
$$
\phi(z,w)=(\frac{z+r}{1+rz},e^{i\theta}\frac{\sqrt{1-r^2}}{1+rz}w)
$$
with $0\leq \theta<2\pi$.  Note that, by replacing $\phi$ with a high iterate, we may 
assume that $r$ and $e^{i\theta}$ are both arbitrarily close to $1$. 
For $\alpha\in(0,\sqrt{1-r^2})$ to be determined further, consider the straight line $L_\alpha:=\{(z,\alpha): |z|^2<1-\alpha^2\}$. 
Then $\phi$ sends $L_\alpha$ to the straight line 
$$
L^\phi_\alpha = \{(z,w): w = \frac{\alpha e^{i\theta}(1-rz)}{\sqrt{1-r^2}}\}.
$$
The intersection point between $L_\alpha$ and $L^\phi_\alpha$ occurs for $z_0=\frac{1-\sqrt{1-r^2}e^{-i\theta}}{r}$.  We have 
that 

\begin{align*}
|z_0|^2 - 1 < 0 & \Leftrightarrow  (1-e^{-i\theta}\sqrt{1-r^2})\cdot (1-e^{i\theta}\sqrt{1-r^2}) - r^2 < 0\\
& \Leftrightarrow  1 - \sqrt{1-r^2}\cdot 2\cos\theta + (1-r^2) - r^2 < 0 \\
& \Leftrightarrow 2(1-r^2) - 2\sqrt{1-r^2}\cdot\cos\theta < 0 \\
& \Leftrightarrow 2\sqrt{1-r^2}(\sqrt{1-r^2}-\cos\theta)<0. 
\end{align*}
So if $r$ is close enough to 1 and if $\theta$ is close enough to $0$ we have that $z_0$ is 
in the unit disk. Then $|z_0|<\sqrt{1-\alpha^2}$ if $\alpha$ is chosen small enough. 
The conditions
in Proposition \ref{prop:genstrat} are therefore fulfilled. 
\end{proof}

\emph{Proof of Theorem \ref{thm:main}:} By Theorem \ref{thm:hypelm} it suffices to prove that if $X$ is compact, 
then $\Gamma$ contains a hyperbolic element. In fact, if $X$ is compact, we have that $\Gamma$ contains only 
hyperbolic elements. 
\begin{lemma}
Let $X=\mathbb B^2/\Gamma$ be a compact complex manifold. Then $\Gamma$ contains only hyperbolic elements. 
\end{lemma}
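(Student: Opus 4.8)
The plan is to rule out parabolic elements in $\Gamma$, since a discrete fixed-point-free subgroup can only consist of hyperbolic and parabolic elements, and to derive a contradiction from compactness of $X=\mathbb B^2/\Gamma$. The classical dichotomy is that a parabolic element forces the existence of a ``cusp'' in the quotient, and a cusp prevents compactness. So the whole argument is: parabolic $\Rightarrow$ noncompact, hence compact $\Rightarrow$ no parabolics $\Rightarrow$ only hyperbolics.

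First I would suppose, for contradiction, that $\Gamma$ contains a parabolic element $\phi$. Passing to the Siegel realisation $H_2=\{(z,w):\mathrm{Im}(w)>|z|^2\}$, by the normal form \eqref{siegelpar1}--\eqref{siegelpar2} we may conjugate so that $\phi$ is one of the two listed types, each of which fixes the single boundary point $\infty$ and preserves every ``horosphere'' $S_c=\{\mathrm{Im}(w)-|z|^2=c\}$, $c>0$ (one checks directly that both normal forms preserve $\mathrm{Im}(w)-|z|^2$). The key geometric input is the Margulis lemma / thick--thin decomposition for the complex hyperbolic metric on $\mathbb B^2$: there is a universal $\epsilon>0$ such that the $\epsilon$-thin part of $X$ near the cusp is a quotient of a horoball $\{(z,w):\mathrm{Im}(w)-|z|^2>R\}$ (for $R$ large) by the stabiliser $\Gamma_\infty$ of $\infty$ in $\Gamma$, and this stabiliser consists entirely of parabolic elements and the identity. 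The next step is to show this cuspidal end is noncompact: inside a horoball, moving ``upward'' in $\mathrm{Im}(w)$ along the imaginary $w$-axis gives a geodesic ray of infinite length that is proper in the horoball, and since $\Gamma_\infty$ acts by isometries preserving each horosphere, the image of this ray in the quotient is a proper ray of infinite length. A proper ray of infinite length shows the cuspidal neighbourhood — and hence $X$ — is noncompact, contradicting the hypothesis.

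The step I expect to be the main obstacle is making the ``cusp neighbourhood embeds'' claim precise, i.e. producing the horoball $\mathcal H_R=\{\mathrm{Im}(w)-|z|^2>R\}$ on which the full group $\Gamma$ acts with $\gamma(\mathcal H_R)\cap\mathcal H_R\neq\emptyset\Rightarrow\gamma\in\Gamma_\infty$. This is the complex-hyperbolic analogue of the Shimizu/Jørgensen lemma and is where discreteness of $\Gamma$ enters nontrivially: one needs a quantitative lower bound, in terms of the translation length data of $\phi$, on how far from the fixed point $\infty$ a horoball must start before it becomes precisely invariant. An alternative, softer route that avoids this: argue directly that $\{\phi^n(x_0)\}_{n\in\mathbb Z}$, for a well-chosen basepoint $x_0$ on the imaginary $w$-axis, is a sequence in $\mathbb B^2$ whose images in $X$ accumulate only at the cusp, yet which has no convergent subsequence in $X$ because the projection map restricted to this set is proper (using precise invariance of the horosphere through $x_0$ under $\langle\phi\rangle$, together with discreteness to push the rest of $\Gamma$ away); this again contradicts compactness. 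Either way the heart of the matter is the same precise-invariance estimate; once that is in hand the rest is the elementary observation that parabolics act cocompactly on no horosphere and that the $w$-direction escapes to the boundary.
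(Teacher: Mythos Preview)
Your outline is correct in spirit and would work, but it is considerably heavier than what the paper actually does. You invoke the Margulis lemma, the thick--thin decomposition, and a complex-hyperbolic Shimizu/J{\o}rgensen precise-invariance statement, and you yourself flag that last step as the main obstacle. The paper bypasses all of this with a two-line observation: a compact hyperbolic manifold cannot carry arbitrarily short homotopically non-trivial loops (the displacement function $p\mapsto\min_{\gamma\neq\mathrm{id}}d_K(\tilde p,\gamma\tilde p)$ is continuous and positive, hence bounded away from zero on a compact quotient), whereas any parabolic element moves some points by arbitrarily small Kobayashi distance. The paper then just verifies the latter by an explicit computation in $H_2$ for each of the two parabolic normal forms: for $\phi(z,w)=(e^{i\theta}z,w+1)$ one takes $a_s=(0,is)$ and reads off $d_K(a_s,\phi(a_s))\to 0$ from the Poincar\'e metric on the vertical half-plane; for $\phi(z,w)=(z-i,w-2z+i)$ one takes $a_s=(i,is)$ and estimates along two short segments.

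The trade-off is this: your argument, once the precise-invariance lemma is in hand, yields the stronger structural statement that a parabolic produces an embedded cusp neighbourhood, which is useful if one cares about the geometry of the end. The paper's argument extracts only what is needed for the lemma---a sequence of non-trivial loops of length tending to zero---and needs no input beyond discreteness (to know $\phi\neq\mathrm{id}$ gives a non-trivial loop) and the explicit normal forms already recorded in the preliminaries. In particular, nothing like a horoball-embedding or stabiliser analysis is required; the whole proof fits in a short paragraph.
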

\begin{proof}
Note that a compact hyperbolic manifold cannot have arbitrarily short non-trivial loops. 
So to prove the lemma it suffices to prove that any parabolic element $\phi\in\mathrm{Aut_{hol}}(\mathbb B^2)$
identifies points with arbitrarily small Kobayashi distance between them. We demonstrate this in the Siegel upper 
half plane $H_2=\{\mathrm{Im}(w)>|z|^2\}$, and up to conjugation there are two cases to consider 
$$
\phi(z,w) = (e^{i\theta}z,w+1) \mbox{ and } \phi(z,w)=(z-i,w-2z + i). 
$$
In the first case we consider points $a_s=(0,i\cdot s)$ and $\phi(a_s)=(0,is+1)\subset \{0\}\times \{\mathrm{Re}(w)>0\}=:H^0_2$. 
In $H_2^0$ the Kobyashi metric is given by $\frac{|dw|^2}{\mathrm{Im}(w)^2}$, and so it is clear that $\mathrm{dist}_K(a,\phi(a_s))\rightarrow 0$
as $s\rightarrow\infty$.  \

In the second case we consider points $a_s=(i,is), s>1$, and we have that $\phi(a_s)=(0,i(s-1))$.  Then 
to estimate the distance between $a_s$ and $\phi(a_s)$ we connect the two points by joining two paths $\gamma^s_1$
and $\gamma^s_2$, the first one being the straight line segment between $i(s-1)$ and $is$ in $H_2^0$, and 
the second being the straight line segment between $(0,is)$ and $(i,is)$ inside the complex disk 
$$
D_s=\{(z,w)\in H_2: w=is\}. 
$$
Then, by the the formula for the Kobayashi metric in $H_2^0$ above, it is clear that the length of $\gamma_1$
goes to zero as $s\rightarrow\infty$. So we consider $\gamma^s_2$. Then 
$$
D_s=\{(z,is):|z|^2<s\},
$$
and so it is clear that the Kobayashi length of $\gamma^s_2$ in $D_s$ goes to zero as $s\rightarrow\infty$. 
\end{proof}

$\hfill\square$

\medskip

\begin{definition}
We now extend our definition of a Fuchsian group to include certain sub-groups of $\mathrm{Aut_{hol}}(\mathbb B^2)$.
For a Fuchsian group $\Gamma\subset\mathrm{Aut_{hol}}(\triangle)$ we may extend each element 
$$
\varphi(z)=e^{i\theta}\frac{z+\alpha}{1-\overline\alpha z}
$$
to an element
$$
\phi_\varphi(z,w)=(e^{i\theta}\frac{z+\alpha}{1-\overline\alpha z},e^{i\psi_\varphi}\frac{\sqrt{1-|\alpha|^2}}{1-\overline\alpha z}w)
$$
of $\mathrm{Aut_{hol}}(\mathbb B^2)$, and it is easy to see that the group $\tilde\Gamma$
generated by the $\phi_\varphi$s is a Kleinian group. We will refer to such special Kleinian groups as 
$2$-dimensional Fuchsian groups.
\end{definition}

\begin{theorem}\label{2fuchsian}
Let $\Gamma$ be a $2$-dimensional Fuchsian group, and set $X=\mathbb B^2/\Gamma$. Then one of the two following
cases can occur. 
\begin{itemize}
\item[(1)] $\Gamma$ is generated by a single parabolic $\phi_\varphi\in\mathrm{Aut_{hol}}(\mathbb B^2)$, $\psi_\varphi=0$, and $\omega^X_\iota=\omega^X_K$.
\item[(2)] $\omega^X_\iota\neq\omega^X_K$.
\end{itemize}
\end{theorem}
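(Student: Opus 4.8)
The dichotomy is governed entirely by the structure theory of Fuchsian groups $\Gamma\subset\mathrm{Aut_{hol}}(\triangle)$ together with the choice of phases $\psi_\varphi$ in the lifted generators. The plan is to split into two cases according to whether the underlying Riemann surface $\triangle/\Gamma$ is compact-type (or, more generally, whether $\Gamma$ contains a hyperbolic element) or whether $\Gamma$ is generated by a single parabolic. First I would dispose of the case where $\Gamma$ contains a hyperbolic element $\varphi$: then its lift $\phi_\varphi$ is a hyperbolic automorphism of $\mathbb B^2$, so $\widetilde\Gamma$ contains a hyperbolic element, and Theorem \ref{thm:hypelm} immediately gives $\omega^X_\iota\neq\omega^X_K$, placing us in case (2). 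By the classification of cyclic Fuchsian groups, the remaining possibility is that $\Gamma=\langle\varphi\rangle$ with $\varphi$ parabolic.

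So assume $\Gamma=\langle\varphi\rangle$, $\varphi$ parabolic, and $\phi=\phi_\varphi$ the chosen lift. After conjugating to the Siegel domain $H_2$ we may take $\phi$ in one of the two normal forms \eqref{siegelpar1}, \eqref{siegelpar2}, where the parameter $\theta$ in \eqref{siegelpar1} is exactly $\psi_\varphi$ (the form \eqref{siegelpar2} corresponds to a nontrivial "shearing" component and should be checked to correspond to $\psi_\varphi\neq 0$ as well, or to be ruled out for $2$-dimensional Fuchsian groups of this cyclic parabolic type). When $\psi_\varphi\neq 0$ — including the \eqref{siegelpar2} case and the case \eqref{siegelpar1} with $\theta\neq 0$ — I would run the strategy of Proposition \ref{prop:genstrat}: take the extremal disk $f(z)=(z,0)$ through the origin (or an appropriate affine slice) in the ball model, or rather a translate placed so that $f(\triangle)$ is a proper embedded disk containing two distinct points $p,q$ with $\phi(p)=q$ lying in a common flat slice; the rotation by $e^{i\theta}$ with $\theta\neq 0$ forces such a slice to be mapped to a transverse slice meeting it, exactly as in the computation in the proof of Theorem \ref{thm:hypelm} (intersection point $z_0$). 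One then checks the separation hypothesis $\pi(B_\delta(p)\cap Z)\cap\pi(B_\delta(q)\cap Z)=\pi(p)$, which holds because $\phi$ acts properly discontinuously and $Z$ is a proper embedded disk, giving $\omega^X_\iota\neq\omega^X_K$, i.e.\ case (2).

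The only remaining case, and the one giving conclusion (1), is $\Gamma=\langle\phi\rangle$ with $\phi(z,w)=(z,w+1)$ on $H_2$ — i.e.\ $\psi_\varphi=0$ and the translation has no rotational or shearing part. Here I must prove the \emph{positive} statement $\omega^X_\iota=\omega^X_K$, which is the main obstacle since Proposition \ref{prop:genstrat} is not available. The idea is that every Kobayashi-extremal disk into $X=H_2/\langle w\mapsto w+1\rangle$ lifts to an extremal disk into $H_2$, and the extremal disks in $\mathbb B^2\cong H_2$ through a point with a given direction are complex geodesics — affine slices (one-dimensional intersections of $H_2$ with complex lines), by the uniqueness discussion in Section \ref{extmaps:ball}. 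One then shows that for each point $x\in X$ and each direction $v$ there is such a complex geodesic disk $\Delta_0\subset H_2$ tangent to the lift of $v$ whose image in $H_2$ is disjoint from all its nontrivial $\Z$-translates $\Delta_0+n$, $n\neq 0$; the quotient map restricted to $\Delta_0$ is then injective, so $\pi|_{\Delta_0}$ is an injective extremal disk through $(x,v)$, forcing $\omega^X_\iota(x,v)\le\omega^X_K(x,v)$, and the reverse inequality is automatic. The geometric point to verify is that a suitable complex geodesic can always be chosen to "go to the boundary" of $H_2$ transversally to the orbit direction $\partial/\partial\mathrm{Re}(w)$ — for instance complex geodesics asymptotic to the fixed boundary point $\infty$ of $\phi$, or geodesics whose boundary endpoint is $\infty$, which are precisely the lines $\{z=c\}\cap H_2$ and these are manifestly disjoint from their $w$-translates only after care (they are \emph{not} disjoint), so the correct choice is a geodesic with one endpoint at $\infty$ but not invariant, namely a line $\{w = \alpha z + \beta\}\cap H_2$ with $\alpha\neq 0$, whose $\Z$-translates are parallel distinct lines hence disjoint; one then checks every $(x,v)$ is covered by such a geodesic (the directions not covered being only $v$ proportional to $\partial_w$ at points of $\{z=\mathrm{const}\}$, which must be handled by a limiting argument or a separate explicit injective competitor). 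Carrying out this last verification cleanly — identifying exactly which complex geodesics are disjoint from their translates and confirming they realize every infinitesimal datum — is where the real work lies.
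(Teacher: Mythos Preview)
Your overall architecture matches the paper's: reduce to the cyclic parabolic case via Theorem~\ref{thm:hypelm}, then in the Siegel model split on whether the rotational phase $\theta=\psi_\varphi$ vanishes, and in the $\theta\neq 0$ case apply Proposition~\ref{prop:genstrat} to an affine extremal slice meeting its $\gamma$-image transversally. Two points, however, need work.

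First, the reduction ``no hyperbolic element $\Rightarrow$ $\Gamma$ is cyclic parabolic'' is true but not justified by your phrase ``by the classification of cyclic Fuchsian groups.'' The paper proves this step directly: given two parabolics $\gamma,\tilde\gamma$ representing distinct generators (normalized in the upper half plane so that $\gamma(z)=z+1$ and $\tilde\gamma(z)=(az+b)/(cz+d)$ with $c\neq 0$ since they do not commute), one checks that at least one of $\gamma^{\pm 1}\circ\tilde\gamma$ has trace $\neq\pm 2$ and is therefore hyperbolic. You should either include this computation or cite the relevant fact about Fuchsian groups precisely.

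Second, and this is the genuine gap: in case~(1) you correctly observe that the unique complex geodesic through $(c,w_0)\in H_2$ in the direction $\partial_w$ is the vertical slice $\{z=c\}$, which is $\phi$-invariant and hence \emph{not} injective in the quotient. Your proposed remedies --- ``a limiting argument'' or ``a separate explicit injective competitor'' --- do not suffice as stated. A limit of injective disks in nearby directions $(\delta,1)$ does not yield an injective disk in the \emph{exact} direction $(0,1)$, and the definition~\eqref{def:ikob} requires $df(0)(1)=\lambda v$ with $v$ fixed; the direction cannot drift. The paper supplies a genuinely additional idea here: take the tilted affine disk $g_b^\delta(\zeta)=(-b+\delta(\zeta-\zeta_0),\zeta)$, whose image in $X$ \emph{is} injective but whose derivative at $\zeta_0$ is $(\delta,1)$; then invoke Siu's theorem to get a Stein neighbourhood $\Omega\subset X$ of the limit curve, build holomorphic vector fields $V_1,V_2$ on $\Omega$ with prescribed $1$-jets at the base point, and post-compose $\pi\circ g_b^\delta$ with their flows $\psi^2_{s_2(\delta)}\circ\psi^1_{s_1(\delta)}$ to rotate the derivative back to $(0,1)$ exactly while preserving injectivity (flows being local biholomorphisms). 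This correction-by-flows step is the missing ingredient; without it your argument for $\omega^X_\iota=\omega^X_K$ in case~(1) is incomplete.
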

\begin{proof}
We consider first the case where $\Gamma$ is not generated by a single parabolic element.  
Then if $\Gamma$ contains a hyperbolic element we have that (2) follows from the previous theorem. 
Suppose then that $\Gamma$ contains extensions of two parabolic elements $\gamma$ and $\tilde\gamma$, 
representing two distinct generators of the fundamental group of the quotient.  
We will show that then $\langle \gamma,\tilde\gamma\rangle$
contains a hyperbolic element. For this it is more convenient to pass to the upper half plane, where first of all,  
up to conjugation, we may assume that $\gamma(z)=z+1$. Furthermore, any automorphism of the upper half plane is on the form 
\begin{equation}\label{normalform}
\tilde\gamma(z)=\frac{az+b}{cz+d} \mbox{ with } a,b,c,d\in\mathbb R, ad-bc=1,
\end{equation}
and moreover, an automorphism of the form \eqref{normalform} is parabolic 
if and only if $a+d=\pm 2$. Now $Y=H/\langle \gamma,\tilde\gamma\rangle$ is an open Riemann surface, and so 
it is known that  $\langle \gamma,\tilde\gamma\rangle$ is a free group, and so the two elements 
do not commute. So we may assume that $c\neq 0$.

Suppose now that $\gamma\circ\tilde\gamma$
is parabolic. We have that 
$$
\gamma(\tilde\gamma(z)) = \frac{az + b}{cz+d} + 1 = \frac{(a+c)z+b+d}{cz+d},
$$
which is parabolic if and only if $a+c+d=\pm 2$. If $a+d=2$ this would imply 
that $c=-4$. But then 
$$
\gamma^{-1}(\gamma(z)) = \frac{(a-c)z+b-d}{cz+d}
$$
is not parabolic, since $a-c+d=6$. The analogous argument applies if $a+d=-2$. \

Suppose next that $\Gamma$ is generated by a single parabolic element $\gamma$. It 
is then more convenient to consider the Siegel upper half-plane model for the unit ball $H_2$
where we up to conjugation have that 
$$
\gamma(z,w)=(e^{i\theta}z,w+1).
$$
Suppose first that $\theta\neq 2k\pi, k\in\mathbb Z$. Setting 
$$
L_{a,b} = \{(z,w): z + aw + b = 0\}
$$
we have that 
$$
G_{a,b} = L_{a,b}\cap H_2
$$
are unique geodesics in $H_2$. We are looking for $G_{a,b}$ that will allow us to apply 
Proposition \ref{prop:genstrat}, i.e., such that $G_{a,b}\cap\gamma(G_{a,b})$
is a single point. So we consider the set of equations 
$$
z + aw + b = 0
$$
and 
$$
e^{i\theta}z + a(w+1) + b = 0.
$$
We set $w=-\frac{b+z}{a}$ and solve
$$
e^{i\theta}z + a  -b - z + b = 0,
$$
and we see that we may set $z_0=\frac{a}{1-e^{i\theta}}$ and then $w_0=-\frac{b}{a} + \frac{1}{1-e^{i\theta}}$
to get that $(z_0,w_0)=G_{a,b}\cap\gamma(G_{a,b})$. Finally, note that for any $a$ one may choose 
$b$ such that $(z_0,w_0)\in H_2$. Now Proposition \ref{prop:genstrat} applies. \

It remains to consider the case that $\theta=0$. In that case, the above calculations show that
the only possibility to achieve that $G_{a,b}\cap\gamma(G_{a,b})\neq\emptyset$ is to 
set $a=0$, i.e., to consider straight vertical lines $G_b=G_{0,b}$. In that case, 
we have that $G_b$ is invariant under $\gamma$, we have that $Z=G_b/\langle\gamma\rangle$
is conformally the punctured disk, and $Z\hookrightarrow X$ is a closed submanifold. Now,
if we consider the covering map $\pi=H_2\rightarrow X$ restricted to $G_b$, 
we have that $\pi:G_b\rightarrow Z$ is a universal covering map, so in this case it 
is not a priori clear if anything prevents $\pi|_{G_b}$ from being a uniform limit (on compacts) of 
injective holomorphic embeddings. In fact, we will show that it is. \

\medskip

Let $H_b=\{\zeta\in\mathbb C:\mathrm{Im}(\zeta)>|b|\}$.
Set $g_b(\zeta)=(-b,\zeta)$ so that $g$ maps $H_b$ onto $G_b$, and 
set $f_b=\pi\circ g_b$. Then $f_b$ is an extremal map for any point in $H_b$, and 
we will pick an arbitrary point $\zeta_0\in H_b$, and show that for any compact 
subset $K\subset H_b$, we have havt that $f_b$ may 
be approximated arbitrarily well on $K$ by injective holomorphic embeddings 
$\tilde f_b:K\rightarrow X$, with the additional property that $d\tilde f_b(\zeta_0)(1)=df_b(\zeta_0)(1)$ are co-linear.  \

By Siu's theorem we have that $f_b(H_b)$ has a Stein neighbourhood $\Omega\subset X$.
Choose local coordinates near $f_b(\zeta_0)$ such that, in the local coordinates in $\mathbb C^2$, we 
have that  $f_b(\zeta_0)=0$ and $df_b(\zeta_0)(1)=(1,0)$.  Since $\Omega$ is Stein 
there are holomorphic vector fields $V_1$ and $V_2$ on $\Omega$ such that in the local 
coordinates just chosen, we have that 
$$
V_1(z)=z_1\frac{\partial}{\partial z_2}+O(\|z\|^2) \mbox{ and } V_2(z) = z_1\frac{\partial}{\partial z_1} + O(\|z\|^2),
$$
and we get the, locally near the origin, the flows are given by $\psi^1_t(z)=(z_1,z_2 + tz_1) + O(\|z\|^2)$
and $\psi^2_t(z)=(e^tz_1,z_2) + O(\|z\|^2)$. \

Now, for a compact set $K\subset H_b$ we let $W\subset\subset X$ be a neighborhood of $f_b(K)$ 
such that the composition of flows $\psi^2_{t_2}\circ\psi^1_{t_1}$ exists on $W$ for 
$|t_1|,|t_2|<\epsilon$ for some $\epsilon>0$. Now for $\delta>0$ we set 
$g_b^\delta(\zeta)=(-b + \delta (\zeta-\zeta_0),\zeta)$, and further $f^\delta_b=\pi\circ g_b^\delta$.
Then $f^\delta_b:K\rightarrow X$ is injective since $g_b^\delta$ maps 
$H_b$ onto a non-vertical line, and $g^\delta_b\rightarrow g_b$ uniformly on $K$ as $\delta\rightarrow 0$.
Now in the local coordinates we have that $df^\delta_b(\zeta)(1)=(1+\eta(\delta),\mu(\delta))$
where $\eta(\delta),\mu(\delta)\rightarrow 0$ as $\delta\rightarrow 0$. Now provided $\delta$
is sufficiently small we may set $s_1(\delta)=-\frac{\mu(\delta)}{1+\eta(\delta)}$ and $s_2(\delta)=\log((1+\eta(\delta))^{-1})$
and set 
$$
\tilde f^\delta_b =    \psi^2_{s_2(\delta)}\circ\psi^1_{s_1(\delta)}\circ f^\delta_b,
$$
and we get that $d\tilde f^\delta_b(\zeta)(1)=(1,0)$ for all $\delta$ (small) and $\tilde f^\delta_b\rightarrow f_b$
uniformly on $K$ as $\delta\rightarrow 0$.

\end{proof}

\section{Quotients of the bi-disk}\label{sec:bidisk}

In this section we will give two results on quotients of the bi-disk. A reason why the case of a bi-disk is more involved than 
the simple case of the unit ball, is that the extremal disks are not unique. Hence, Proposition \ref{prop:genstrat} cannot 
be applied to any extremal holomorphic disk, and we have to work with the "diagonals" $D_\xi=\{(z,z\cdot\xi)\}$ with $|\xi|=1$. 

Theorem \ref{thm:bidisk1} is a consequence 
of the following. 
\begin{theorem}\label{thm:bidisk2} (Jarnicki \cite{Jarnicki1})
Let $\Gamma_j\subset\mathrm{Aut_{hol}}(\triangle)$ be  
Fuchsian groups for $j=1,2$, and set $X:=\triangle^2/\Gamma$, with $\Gamma=\Gamma_1\oplus\Gamma_2$.
Suppose that there exists at least one element $\phi=(\varphi_1,\varphi_2)\in\Gamma$ such that 
$\varphi_j\neq\mathrm{id}$ for $j=1,2$. Then $\omega^X_\iota\neq\omega^X$.
\end{theorem}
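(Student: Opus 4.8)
The plan is to apply Proposition \ref{prop:genstrat} with $\Omega=\triangle^2$, covering map $\pi:\triangle^2\to X$, and an appropriately chosen ``diagonal'' disk $D_\xi=\{(z,z\xi):z\in\triangle\}$ with $|\xi|=1$. Recall from Section \ref{extmaps:bidisk} that for $|\xi|=1$ the map $f(z)=(z,z\xi)$ is the \emph{unique} extremal map through the origin in the direction $(1,\xi)$; it is a proper holomorphic embedding of $\triangle$ into $\triangle^2$ whose image is $D_\xi$. By $\mathrm{Aut_{hol}}(\triangle)$-equivariance (acting diagonally if we conjugate by $(\varphi,\varphi)$) one gets a whole family of such extremal disks: for $\varphi\in\mathrm{Aut_{hol}}(\triangle)$ the disk $\varphi(D_\xi):=\{(\varphi_1(z),\varphi_2(z\xi))\}$ — more conveniently, disks of the form $D=\{(\varphi(z),\psi(z)):z\in\triangle\}$ where $\varphi,\psi\in\mathrm{Aut_{hol}}(\triangle)$ — are proper embeddings that are unique extremal maps through each of their points, since they are affine images of a diagonal via a bidisk automorphism. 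The freedom in choosing $\xi$ (a full circle's worth) and in post-composing with a bidisk automorphism is exactly what we will exploit.

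The key geometric step is this: given $\phi=(\varphi_1,\varphi_2)\in\Gamma$ with both $\varphi_j\neq\mathrm{id}$, we want to find an extremal diagonal-type disk $Z$ (of the form just described) such that $\phi(Z)\cap Z$ consists of exactly one point $q$, and moreover such that $\pi|_Z$ separates a small neighborhood of $q$ from a small neighborhood of $\phi^{-1}(q)$ on $Z$ in the sense required by Proposition \ref{prop:genstrat} (this last point is automatic once $\phi(Z)\cap Z$ is a single transverse point, since then a small ball around $q$ meets $Z$ and $\phi(Z)$ in arcs whose $\pi$-images in $X$ overlap only along the $\Gamma$-orbit, which near $q$ reduces to the single pair $\{q,\phi^{-1}(q)\}$). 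To produce such a $Z$: since $\varphi_1\neq\mathrm{id}$ and $\varphi_2\neq\mathrm{id}$, each $\varphi_j$ is a hyperbolic or parabolic automorphism of $\triangle$; pick any point $a_1\in\triangle$ and note $b_1:=\varphi_1(a_1)\neq a_1$, and similarly $a_2\in\triangle$, $b_2:=\varphi_2(a_2)\neq a_2$. One then chooses the disk $Z$ to pass through $(a_1,a_2)$ in a direction $(1,\xi)$, $|\xi|=1$, adjusted (using a bidisk automorphism to normalize, and then one real parameter $\arg\xi$) so that the image disk also passes through the point $(b_1,b_2)$; the two disks $Z$ and $\phi(Z)$ then share those construction points, and a generic choice of the remaining parameters makes the intersection a single transverse point. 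The cleanest incarnation: parametrize candidate extremal disks as graphs $\{(z,\chi(z)):z\in\triangle\}$ with $\chi\in\mathrm{Aut_{hol}}(\triangle)$, so that $\phi(Z)=\{(z,\varphi_2(\chi(\varphi_1^{-1}(z))))\}$ is again such a graph; then $\phi(Z)\cap Z$ is the graph over the fixed-point set of the disk automorphism $z\mapsto \varphi_2\circ\chi\circ\varphi_1^{-1}\circ\chi^{-1}$, which generically (for a suitable choice of $\chi$) is a hyperbolic or parabolic element of $\mathrm{Aut_{hol}}(\triangle)$ with a fixed point inside $\triangle$ — wait, hyperbolic/parabolic ones have no interior fixed point, so one must instead arrange this composition to be \emph{elliptic} with a single interior fixed point, which is possible precisely because $\mathrm{Aut_{hol}}(\triangle)$ is $3$-real-dimensional and $\chi$ gives us enough parameters to steer the conjugacy class of $\varphi_2\circ\chi\circ\varphi_1^{-1}\circ\chi^{-1}$.

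Once such a $Z$ is in hand, Proposition \ref{prop:genstrat} applies verbatim with $p=(a_1,a_2)$, $v=(1,\xi)$, $q=(b_1,b_2)=\phi(p)$, and yields $\omega^X_\iota(p_*,v_*)>\omega^X_K(p_*,v_*)$, hence $\omega^X_\iota\neq\omega^X_K$. I expect the main obstacle to be the parameter-counting argument in the previous paragraph: showing that, for \emph{every} admissible pair $(\varphi_1,\varphi_2)$ of non-identity disk automorphisms, there is a choice of $\chi$ (equivalently of $\xi$ and a normalizing automorphism) making $\varphi_2\circ\chi\circ\varphi_1^{-1}\circ\chi^{-1}$ elliptic with interior fixed point while keeping $Z$ a genuine extremal disk and the intersection point transverse. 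This should follow from a continuity/degree argument — as $\chi$ ranges over $\mathrm{Aut_{hol}}(\triangle)$ the trace of (a lift to $\mathrm{SL}_2(\mathbb R)$ of) that composition varies continuously and can be pushed into the elliptic range $|{\rm tr}|<2$ — but making this robust for the parabolic cases of $\varphi_j$, where lifts sit exactly on the boundary $|{\rm tr}|=2$, will require the most care; one handles it by the same explicit upper-half-plane matrix computations used in Theorem \ref{2fuchsian}.
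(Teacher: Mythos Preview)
Your overall strategy coincides with the paper's: both apply Proposition \ref{prop:genstrat} to a diagonal-type disk $Z=\{(z,\chi(z)):z\in\triangle\}$ with $\chi\in\mathrm{Aut_{hol}}(\triangle)$, and both recognize that the crux is to arrange $Z\cap\phi(Z)$ to be a single point, i.e., to make $\chi^{-1}\varphi_2\chi\varphi_1^{-1}$ elliptic and nontrivial. Where you propose an abstract trace-continuity/parameter-counting argument over $\chi\in\mathrm{Aut_{hol}}(\triangle)$, the paper instead conjugates one factor---replacing $\varphi_1$ by $\tilde\varphi_1=\psi^{-1}\varphi_1\psi$ (equivalently, choosing $\chi=\psi^{-1}$)---so that $\tilde\varphi_1(0)=\varphi_2(0)$ while $\tilde\varphi_1\neq\varphi_2$; this directly places an interior fixed point of $\varphi_2\tilde\varphi_1^{-1}$ at $\varphi_2(0)$ (Lemma \ref{diag}). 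The existence of such a $\psi$ is obtained concretely, case by case, by matching Kobayashi translation distances: Proposition \ref{increase} shows that $s\mapsto d_K(se^{i\theta},\varphi(se^{i\theta}))$ sweeps out $[\mathcal M,\infty)$ for a hyperbolic $\varphi$, and the parabolic case is handled by explicit upper-half-plane computations. When $\varphi_1$ and $\varphi_2$ are conjugate (equal moduli) the paper passes to the element $(\varphi_1,\varphi_2^2)\in\Gamma_1\oplus\Gamma_2$.

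The gap in your proposal is exactly where you flag it: the parameter-counting is not carried out, and it is not routine. If $\varphi_1$ and $\varphi_2$ are conjugate, some $\chi$ give the identity (trace $\pm 2$), and pushing the trace strictly into the elliptic range requires a genuine analysis of how the trace varies near this degenerate locus; the paper avoids this by replacing $\varphi_2$ with $\varphi_2^2$. Likewise, the all-parabolic case sits on the boundary $|\mathrm{tr}|=2$ and needs an explicit construction, which the paper provides. Your framework is correct and would succeed once these points are nailed down, but as written it stops short of a proof; the paper's distance-matching argument (via Proposition \ref{increase} and the explicit parabolic computation) is what actually closes the argument.
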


The reason why Theorem \ref{thm:bidisk1} follows from this, is that if $\Gamma$ only contains 
elements of the form either $(\varphi,\mathrm{id})$ or $(\mathrm{id},\varphi)$, then $X$ would 
not be compact. \

\medskip

Our next result is positive. 

\begin{theorem}\label{thm:bidisk3}
Let $\Gamma\subset\mathrm{Aut_{hol}}(\triangle^2)$ be a Kleinian group, and set $X:=\triangle^2/\Gamma$.
Suppose all elements $\phi\in\Gamma$ are of the form $(\varphi,\mathrm{id})$. Then if $(\triangle\times\{0\})/\Gamma$
is an open Riemann surface we have that $\omega^K_\iota=\omega^K$.
\end{theorem}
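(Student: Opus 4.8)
The plan is to exploit the product structure: write $\Gamma\subset\mathrm{Aut_{hol}}(\triangle^2)$ with all elements of the form $(\varphi,\mathrm{id})$, so $\Gamma$ is really a copy of a Fuchsian group $\Gamma_1\subset\mathrm{Aut_{hol}}(\triangle)$ acting on the first factor, and $X=\triangle^2/\Gamma=(\triangle/\Gamma_1)\times\triangle=:S\times\triangle$, where by hypothesis $S=(\triangle\times\{0\})/\Gamma$ is an open Riemann surface. Since $\omega_\iota\geq\omega_K$ always holds, it suffices to prove $\omega^X_\iota(p,v)\leq\omega^X_K(p,v)$ for every $(p,v)\in TX$. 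First I would reduce to a convenient point and direction. By the transitivity of $\mathrm{Aut_{hol}}(\triangle)$ on the second factor and by applying a deck-transformation-compatible automorphism, and because the quantity is continuous, it is enough to treat $p=(p_1,0)$; and by scaling $v$ we may assume $v=(v_1,v_2)$ with, say, the extremal disk for $\omega^X_K(p,v)$ normalized appropriately. The key point is that an extremal disk for the Kobayashi metric on the product $S\times\triangle$ is, up to reparametrization, of the form $z\mapsto(\sigma(z),\tau(z))$ where $\sigma:\triangle\to S$ and $\tau:\triangle\to\triangle$, and at least one of the two coordinate maps is an extremal disk (in fact a covering map onto its image in the case of $S$, or the identity in the case of $\triangle$) — this is the standard product formula $\omega^{S\times\triangle}_K=\max\{\omega^S_K,\omega^\triangle_K\}$ applied infinitesimally.

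The heart of the argument is then to perturb an extremal disk to an \emph{injective} one without changing the derivative at $0$. Fix an extremal $f=(\sigma,\tau):\triangle\to X$ for $(p,v)$, say with $f(0)=p$ and $f'(0)=\lambda^{-1}v$, $\lambda=\omega^X_K(p,v)$. If $\tau$ already realizes the maximum (i.e.\ the second-factor direction dominates), then $\tau$ is an automorphism of $\triangle$, hence injective, and one can try to make $\sigma$ injective as well by composing with a thin perturbation; but the cleaner route, mirroring the endgame of the proof of Theorem \ref{2fuchsian}, is: lift $f$ to $\tilde f=(\tilde\sigma,\tau):\triangle\to\triangle^2$ where $\tilde\sigma:\triangle\to\triangle$ is a lift of $\sigma$ through the covering $\triangle\to S$. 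Since $S$ is an open Riemann surface, the restricted covering map $\triangle\to S$ (equivalently $\triangle\times\{0\}\to (\triangle\times\{0\})/\Gamma$) is a uniform-on-compacts limit of injective holomorphic maps into $X$: this is exactly the phenomenon established at the end of the proof of Theorem \ref{2fuchsian}, where one uses a Stein neighbourhood of the image (Siu's theorem) together with flows of holomorphic vector fields to straighten out perturbations while preserving the $1$-jet at a chosen basepoint. Concretely, on a compact $K\subset\triangle$ I would approximate $f|_K$ by maps $f^\delta=(\sigma^\delta,\tau):\triangle\to X$ where $\sigma^\delta$ is obtained by pushing the $\triangle$-valued lift $\tilde\sigma$ off its non-injective locus along a direction transverse to the fibres, making $f^\delta$ injective for $\delta>0$ small (the second coordinate $\tau$ already separates fibres where $\sigma$ might fail to), and then correcting with the composed flows $\psi^2_{s_2}\circ\psi^1_{s_1}$ so that $df^\delta(\zeta_0)(1)$ stays proportional to $f'(\zeta_0)(1)$.

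Having produced, for each $\delta>0$ and each compact $K\ni 0$, an injective holomorphic $\tilde f^\delta:K\to X$ with $\tilde f^\delta\to f$ uniformly on $K$ and $d\tilde f^\delta(0)(1)$ collinear with $f'(0)$, I would conclude by a standard exhaustion/diagonal argument: take an increasing exhaustion $K_1\subset K_2\subset\cdots$ of $\triangle$ by compacts, for each $m$ pick $\delta_m$ small enough that the associated injective map $\tilde f_m:\triangle\to X$ (defined by the same formula on all of $\triangle$, which is legitimate since $g^\delta_b$-type perturbations are globally defined) satisfies $\|\tilde f_m-f\|_{K_m}<1/m$ and $d\tilde f_m(0)(1)=\mu_m f'(0)$ with $\mu_m\to 1$; injectivity of $\tilde f_m$ on all of $\triangle$ follows because the perturbation moves the first coordinate onto a graph over the disk, exactly as in the proof of Theorem \ref{2fuchsian} where $g_b^\delta$ maps onto a non-vertical line and is therefore injective. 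Then $\omega^X_\iota(p, d\tilde f_m(0)(1))\leq 1/|d\tilde f_m(0)(1)|\to\lambda^{-1}|v|^{-1}\cdot(\text{norm factors})$, and by continuity of $\omega^X_\iota$ (or rather by homogeneity, rescaling $\tilde f_m$) we get $\omega^X_\iota(p,v)\leq\omega^X_K(p,v)$, hence equality.

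\textbf{Main obstacle.} The delicate step is verifying that the perturbed map $\tilde f^\delta$ is genuinely \emph{injective on all of $\triangle$}, not just locally injective or injective on $K$: in the proof of Theorem \ref{2fuchsian} this worked because a non-vertical affine line in $H_b$ descends injectively to the punctured-disk submanifold and the ambient $\triangle^2$-coordinate $w$ was a global coordinate on $H_b$. Here $S$ may have complicated topology and fundamental group, so I must check that the lift $\tilde\sigma:\triangle\to\triangle$ composed with a suitable deformation still descends to a globally injective map $\triangle\to X$ — this requires that the deformation separates \emph{all} pairs of points identified by the deck group, which is where the hypothesis that $S$ is an \emph{open} Riemann surface (so $\Gamma_1$ is free, and $\triangle\times\{0\}\hookrightarrow X$ in a controlled way) and the availability of the extra $\triangle$-coordinate are both essential. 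I expect this to reduce, after the Stein-neighbourhood reduction, to the same vector-field straightening already carried out in Theorem \ref{2fuchsian}, applied now with the second disk factor providing the room to separate the fibres of the covering $\triangle\to S$ simultaneously.
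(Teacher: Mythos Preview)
Your proposal is in the right spirit but substantially overcomplicates the generic case, and in doing so buries the one-line observation that actually carries most of the weight.

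The paper's argument splits into three cases according to the direction $v$. If $v$ is vertical, the vertical line descends injectively since $\Gamma$ acts only on the first factor --- you note this implicitly. If $v$ is horizontal, the extremal disk is $\triangle\times\{\beta\}$, which covers $S\times\{\beta\}$ non-injectively, and here the paper invokes exactly the Stein-neighbourhood-plus-flows argument from the end of Theorem \ref{2fuchsian}, as you anticipate. The main point you miss is the \emph{generic} case (neither vertical nor horizontal): after normalizing the base point to $(0,\beta)$ and applying an automorphism in the second factor, the extremal disk can be taken as $\xi\mapsto(\xi,\psi^{-1}(\lambda\xi))$ with $\lambda\neq 0$. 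Now suppose a nontrivial deck transformation $(\varphi,\mathrm{id})$ identifies two points of this curve. Since the second coordinate is unchanged by $\Gamma$, we would need $\psi^{-1}(\lambda\xi)=\psi^{-1}(\lambda\varphi(\xi))$, hence $\xi=\varphi(\xi)$, forcing $\varphi=\mathrm{id}$ because $\Gamma$ is fixed-point free. So the extremal disk is \emph{already} injective in $X$ --- no perturbation, no flows, no approximation on compacta. You actually state the germ of this in your parenthetical ``the second coordinate $\tau$ already separates fibres where $\sigma$ might fail to'', but then you discard it in favour of the heavier machinery.

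Your perturbation-for-all-directions scheme is therefore doing work only in the horizontal case, where it coincides with the paper's. For the generic case your scheme is not wrong in principle, but it introduces the very ``main obstacle'' you worry about (global injectivity of the flowed map on all of $\triangle$, existence of the flow compositions for the required times) as an artifact of the method. The hypothesis that $S$ is an \emph{open} Riemann surface is used only in the horizontal case, via Siu's Stein neighbourhood theorem, exactly as in Theorem \ref{2fuchsian}; it plays no role in the generic direction, contrary to what your final paragraph suggests.
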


The main step in proving Theorem \ref{thm:bidisk2} is to prove it in the special case that $\Gamma=\Gamma_1\oplus\Gamma_2$ 
where the $\Gamma_j$s are cyclic groups in $\mathrm{Aut_{hol}}(\triangle)$, i.e., when $X=Y_1\times Y_2$ where 
the $Y_j$ are hyperbolic Riemann surfaces with $\pi_1(Y_j)=\mathbb Z$. 
We will consider three cases separately, and then we will explain the general case.

\subsection{The case where the $Y_j$'s are both annuli}

In this section we will prove the following:
\begin{theorem}\label{annuli}
Let $Y_1$ and $Y_2$ be annuli, and set $X:=Y_1\times Y_2$. Then $\omega^X_\iota\neq\omega^X_K$.
\end{theorem}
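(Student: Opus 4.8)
The strategy is to realise $X = Y_1 \times Y_2$ as a quotient of the bidisk $\triangle^2$ and to exhibit, inside $\triangle^2$, a diagonal disk $D_\xi = \{(z, z\xi) : z \in \triangle\}$ with $|\xi| = 1$ that meets the hypotheses of Proposition \ref{prop:genstrat}. Write $Y_j = \triangle/\langle \varphi_j \rangle$ where $\varphi_j \in \mathrm{Aut_{hol}}(\triangle)$ is hyperbolic (an annulus is the quotient of $\triangle$ by a cyclic group generated by a single hyperbolic element). After conjugating each factor we may take $\varphi_j(z) = \frac{z + r_j}{1 + r_j z}$ with $0 < r_j < 1$, and by replacing $\varphi_1, \varphi_2$ by suitable iterates we are free to assume $r_1, r_2$ are as close to $1$ as we wish. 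The covering group is $\Gamma = \langle \phi \rangle \times \langle \psi \rangle$ acting coordinatewise on $\triangle^2$, where I will only need the single element $\Phi = (\varphi_1, \varphi_2)$. The plan is: (i) recall from Subsection \ref{extmaps:bidisk} that $f(z) = (z, z\xi)$ with $|\xi| = 1$ is the unique extremal map through $f(0) = 0$ with tangent $v = (1,\xi)$, and it is a proper holomorphic embedding of $\triangle$ into $\triangle^2$; (ii) produce a point $q \in Z = f(\triangle)$ and $\delta > 0$ so that $\pi(B_\delta(p) \cap Z) \cap \pi(B_\delta(q) \cap Z) = \{\pi(p)\} = \{\pi(q)\}$; (iii) invoke Proposition \ref{prop:genstrat}.

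The geometric content is step (ii), which amounts to finding a group element $\Phi \in \Gamma$ (or a product of the two generators) such that $\Phi(D_\xi)$ meets $D_\xi$ in exactly one point lying in $\triangle^2$, for a suitable choice of $\xi$ on the unit circle, and then localising. The curve $\Phi(D_\xi)$ is the parametrised set $\{(\varphi_1(z), \varphi_2(z\xi)) : z \in \triangle\}$; the condition that a point of this set lies on $D_\xi$ is $\varphi_2(z\xi) = \xi \cdot \varphi_1(z')$ where $\varphi_1(z) = z'$, i.e. after eliminating, a single analytic equation in $z$. The key computation, parallel to the one in the proof of Theorem \ref{thm:hypelm}, is to check that for $r_1, r_2$ close to $1$ and a good choice of $\xi$ this equation has a solution $z_0$ with $|z_0| < 1$ and $|z_0 \xi| < 1$, and that this solution is unique (transversality of the two real $2$-planes $D_\xi$ and $\Phi(D_\xi)$ inside the real $4$-manifold $\triangle^2$, which holds generically and can be arranged). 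A clean way to organise this: pass to the annulus/half-plane picture or simply exploit that both $\varphi_1$ and $\varphi_2$ translate along the respective geodesics, so that on the diagonal $D_\xi$ the composition behaves like a Möbius self-map of $\triangle$ whose fixed-point analysis (hyperbolic versus the degenerate case) we control via the $r_j \to 1$ limit, exactly as in the $|z_0|^2 - 1 < 0$ inequality chain already carried out in Theorem \ref{thm:hypelm}.

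Once a single intersection point $q = f(a)$ has been found with $\Phi(q) \in D_\xi$ and $\Phi(q) \ne q$ (so that $\pi(q)$ is a point where the image curve $\pi(Z) \subset X$ self-intersects), the localisation in step (ii) is routine: since $Z$ is a properly embedded disk and $\Gamma$ acts properly discontinuously, the set $\{\gamma \in \Gamma : \gamma(Z) \cap Z \ne \emptyset\}$ is discrete, each such intersection (other than along $Z$ itself, which does not occur here since $\Gamma$ contains no element stabilising $D_\xi$ when both $\varphi_j \ne \mathrm{id}$) is a finite set of points, and one shrinks $\delta$ so that $B_\delta(p) \cap Z$ and $B_\delta(q) \cap Z$ project to disjoint pieces except at the common point $\pi(p) = \pi(q)$. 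Then Proposition \ref{prop:genstrat} gives $\omega^X_\iota(p, v_*) > \omega^X_K(p, v_*)$, hence $\omega^X_\iota \ne \omega^X_K$.

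\textbf{Main obstacle.} The delicate point is not the existence of an intersection of $D_\xi$ with some $\Gamma$-translate — that is cheap — but rather showing that for a well-chosen $\xi$ with $|\xi| = 1$ there is a translate meeting $D_\xi$ in \emph{exactly one} point \emph{inside} $\triangle^2$ (so that the hypothesis of Proposition \ref{prop:genstrat} about a single identified point is met) while simultaneously keeping that point away from the other sheets of the orbit; I expect the uniqueness/transversality bookkeeping, handled by taking high iterates of the generators so $r_1, r_2 \uparrow 1$ and reusing the inequality computation from the proof of Theorem \ref{thm:hypelm}, to be where the real work lies.
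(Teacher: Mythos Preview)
Your overall architecture is right and matches the paper: realise $X$ as $\triangle^2/\Gamma$, use a diagonal $D_\xi$ with $|\xi|=1$ as a unique extremal disk, and feed a self-intersection of $\pi(D_\xi)$ into Proposition~\ref{prop:genstrat}. The tactics, however, diverge.

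The paper does \emph{not} vary $\xi$ or pass to high iterates. It fixes $\xi=1$ (the ordinary diagonal $D$) and instead \emph{conjugates} one generator. Writing $\varphi_j(z)=\frac{z+r_j}{1+r_jz}$, the paper first proves a growth lemma (Proposition~\ref{increase}): for fixed $\theta\in(0,\pi)\cup(\pi,2\pi)$ the function $s\mapsto d_K(se^{i\theta},\varphi_1(se^{i\theta}))$ is strictly increasing from $\mathcal M(Y_1)$ to $\infty$. Hence one can find $z$ with $d_K(z,\varphi_1(z))=d_K(0,\varphi_2(0))$, and a M\"obius map $\psi$ taking the segment $[0,r_2]$ onto the geodesic segment $[z,\varphi_1(z)]$. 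Setting $\tilde\varphi_1=\psi^{-1}\varphi_1\psi$, one gets $\tilde\varphi_1(0)=r_2=\varphi_2(0)$ while $\tilde\varphi_1\neq\varphi_2$ (the invariant geodesics differ). Then $\Phi=(\tilde\varphi_1,\varphi_2)$ sends $(0,0)\in D$ to $(r_2,r_2)\in D$, and a short lemma (Lemma~\ref{diag}) shows the two local branches of $\pi(D)$ at $\pi(0)$ are distinct. The equal-modulus case is handled by matching $\tilde\varphi_1(0)$ with $\varphi_2^2(0)$ instead.

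Two comments on your plan. First, the appeal to the inequality chain from Theorem~\ref{thm:hypelm} is misplaced: that computation is specific to the $\mathbb B^2$-automorphism form \eqref{ballhyp}, where a rotation parameter $e^{i\theta}$ is built in, and it does not transfer to a pair of disk automorphisms of the form \eqref{rform}. (In fact if you keep both $\varphi_j$ in standard form and insist on $\xi=1$, the equation $\varphi_1(z)=\varphi_2(z)$ forces $z=\pm1$ and there is \emph{no} interior intersection, no matter how large the iterates.) Your route does succeed if you take $\xi\neq1$: the intersection condition $\varphi_2(z\xi)=\xi\varphi_1(z)$ is a quadratic in $z$ whose two roots have product of modulus~$1$, so one root lies in $\triangle$ --- but that is a fresh computation, not a reuse of the ball case. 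Second, your ``main obstacle'' (a \emph{globally unique} intersection point) is a red herring. Proposition~\ref{prop:genstrat} only requires that $\pi(B_\delta(p)\cap Z)$ and $\pi(B_\delta(q)\cap Z)$ meet in the single point $\pi(p)$, which is automatic as soon as $\Phi(D_\xi)\neq D_\xi$ as germs at the intersection (two distinct irreducible analytic curves meet in isolated points). This is exactly what the paper isolates as Lemma~\ref{diag}; there is no transversality bookkeeping to do.

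In short: same skeleton, different flesh. The paper's conjugation trick (plus the growth lemma) replaces your search-for-intersection computation and sidesteps the difficulty you flagged; your approach can also be completed, but not by recycling Theorem~\ref{thm:hypelm}.
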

To prove this theorem we include a subsection where we introduce a way of measuring conformal moduli of annuli, and then 
we give the proof in the subsection following it.

\subsection{Conformal moduli of annuli}

Suppose $X$ and $Y$ are Riemann surfaces with a biholomorphism $\psi:X\rightarrow Y$.
Then a choice of lifting of $\psi(\pi(0))$ induces an automorphism $g\in\mathrm{Aut_{hol}}(\triangle)$
such that the following diagram commutes. 
 
\[\begin{tikzcd}
\triangle \arrow{r}{g} \arrow[swap]{d}{\pi} & \triangle \arrow{l}{}\arrow{d}{\pi'} \\
X \arrow{r}{\psi} & Y\arrow{l}{}
\end{tikzcd}
\]
Then if $\Gamma$ denotes the Deck-group associated to $\pi$ we have that 
$\Gamma'=g\Gamma g^{-1}$ is the Deck group associated to $\pi'$.
On the other hand, if $\Gamma$ is a Fuchsian group, if $\pi:\triangle\rightarrow X=\triangle/\Gamma$ is 
the universal cover, and if $g\in\mathrm{Aut_{hol}}(\triangle)$, then $g$ induces a biholomorphism 
$\psi:X\rightarrow Y$, where $Y=\triangle/\Gamma'$ with $\Gamma'=g\Gamma g^{-1}$. Conjugating by 
such a $g$ corresponds to a change of basepoint and direction for the universal covering map.  \

Now suppose $X$ is an annulus, i.e., that $X=\triangle/\langle\varphi\rangle$ where 
$\varphi$ is hyperbolic. Then $\varphi$ has precisely two fixpoints $p^\alpha$ and $p^\rho$ on $b\triangle$,  
one attracting and one repelling, and we let $\lambda^\alpha$ and $\lambda^\rho$ denote their multipliers. 
Furthermore, $p^\alpha$ and $p^\rho$ are joined by the closure of a unique geodesic $\gamma\subset\mathbb D$.
After conjugation (change of base point) we may assume that $p^\alpha=$ and  $p^\rho =-1$, in which case 
$\gamma=\mathbb R\cap\triangle$, we have that $\gamma$ is $\varphi$-invariant, and  
we 
have that $\varphi$ is on the form 
\begin{equation}\label{rform}
\varphi(z)=\frac{z+r}{1+rz}.
\end{equation}
Then $1$ is an attracting fixed point for $\varphi$, and $\varphi^n\rightarrow 1$ uniformly on compact 
subsets of $\overline\triangle\setminus\{-1\}$ as $n\rightarrow\infty$, so $\gamma$ is the unique
$\varphi$-invariant geodesic in $\triangle$. Since the multipliers are invariant under conjugation we 
may compute them directly from the form \eqref{rform} and we see that 
$\lambda^\alpha=1-r$ and $\lambda^\rho=1+r$.
It follows that $r$ is completely determined by any one of the multipliers, that the multipliers are 
determined completely by $r$, and 
we set 
$$
\mathcal M(X)=\frac{1}{2}\log\frac{1+r}{1-r}=l_K(\gamma).
$$
\begin{prop}\label{modulus}
Let $X$ and $Y$ be two annuli. Then $X$ is biholomorphic to $Y$ if and only if $\mathcal M(X)=\mathcal M(Y)$. 
\end{prop}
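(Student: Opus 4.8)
The plan is to show that $\mathcal{M}$ is a complete conformal invariant of annuli $X = \triangle/\langle\varphi\rangle$, $\varphi$ hyperbolic. One direction is immediate: if $\psi\colon X\to Y$ is a biholomorphism, then by the discussion preceding the statement there is $g\in\mathrm{Aut_{hol}}(\triangle)$ with $\Gamma' = g\Gamma g^{-1}$, so the generator $\varphi'$ of the Deck group of $Y$ is conjugate to $\varphi$; since the multipliers $\lambda^\alpha,\lambda^\rho$ (equivalently the parameter $r$) are invariant under conjugation in $\mathrm{Aut_{hol}}(\triangle)$, we get $\mathcal{M}(X) = \mathcal{M}(Y)$.

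For the converse, suppose $\mathcal{M}(X) = \mathcal{M}(Y)$. Since $\mathcal{M}$ determines $r$ uniquely (the map $r\mapsto \tfrac12\log\tfrac{1+r}{1-r}$ is a strictly increasing bijection of $(0,1)$ onto $(0,\infty)$), both $X$ and $Y$ arise as $\triangle/\langle\varphi_r\rangle$ for the \emph{same} $\varphi_r(z) = \frac{z+r}{1+rz}$, after we conjugate the defining generator of each into the normal form \eqref{rform}. Concretely: given the hyperbolic generator $\varphi$ of $\Gamma$, its fixed points $p^\alpha, p^\rho\in b\triangle$ can be moved to $1$ and $-1$ by an automorphism $g_X\in\mathrm{Aut_{hol}}(\triangle)$, and then $g_X\varphi g_X^{-1}$ has attracting fixed point $1$, repelling fixed point $-1$, and the prescribed multiplier, so it must equal $\varphi_r$ (an automorphism of $\triangle$ is determined by its two fixed points on $b\triangle$ together with one multiplier). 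Do the same for $Y$ with some $g_Y$. Then $g_X$ induces a biholomorphism $X\to \triangle/\langle\varphi_r\rangle$ and $g_Y$ induces a biholomorphism $Y\to\triangle/\langle\varphi_r\rangle$, and composing gives the desired biholomorphism $X\to Y$.

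The one point deserving a little care is the claim that a hyperbolic $\psi\in\mathrm{Aut_{hol}}(\triangle)$ with attracting fixed point $1$, repelling fixed point $-1$, and attracting multiplier $1-r$ is \emph{exactly} $\varphi_r$. I would verify this by the standard normalization: conjugating $\triangle$ to the upper half-plane $H$ so that $1\mapsto 0$ and $-1\mapsto\infty$, every automorphism of $H$ fixing $0$ and $\infty$ is $z\mapsto \mu z$ for some $\mu>0$, and $\mu$ is precisely the multiplier at the attracting fixed point; hence such a $\psi$ is unique once the multiplier is fixed, and since $\varphi_r$ already has these fixed points and multiplier, they coincide. I don't anticipate a genuine obstacle here — the whole argument is a bookkeeping exercise about conjugacy classes of hyperbolic Möbius transformations — but the part most prone to slips is matching up \emph{which} fixed point is attracting versus repelling (and hence which multiplier is $1-r$ versus $1+r$), so I would be explicit that, with $p^\alpha = 1$ and $p^\rho = -1$, one has $\varphi_r^n\to 1$ uniformly on $\overline{\triangle}\setminus\{-1\}$, as already noted in the text.
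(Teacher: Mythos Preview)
Your proposal is correct and follows essentially the same approach as the paper: both directions rest on conjugating the hyperbolic generators into the normal form \eqref{rform} and invoking conjugation-invariance of the multiplier to identify the parameter $r$. Your treatment is slightly more explicit about the uniqueness of $\varphi_r$ given its fixed points and multiplier, while the paper instead argues that the conjugating map $g$ must itself be of the form \eqref{rform}; these are two phrasings of the same bookkeeping.
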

\begin{proof}
Suppose $\mathcal M(X)=\mathcal M(Y)$. Then after conjugation we may assume that both $X$ and $Y$
are quotients of the disk by the group generated by the same map \eqref{rform}, and so they are biholomorphic.  \

Next suppose $X$ is biholomorphic to $Y$. After conjugation we may assume that both $X$ and 
$Y$ are quotients of the disk by the groups generated by maps $\varphi_{r_X}$ and $\varphi_{r_Y}$ on the form \eqref{rform}, but this 
time a priori with different dilations $r_X$ and $r_Y$. As noted above, the biholomorphism induces
a conjugation $g$, which necessarily has to fix the points $\pm 1$ individualy, and so $g$ is also on the 
form \eqref{rform}. Consider what happens for the conjugation 
$$
\varphi_{r_Y} = g\circ\varphi_{r_X}\circ g^{-1}
$$
at the fixed point $1$. By the chain rule, the map $g\circ\varphi_{r_X}\circ g^{-1}$ has the same multiplier 
as the map $\varphi_{r_X}$, and so the map $\varphi_{r_X}$ has the same multiplier 
as the map $\varphi_{r_Y}$, from which it follows that $r_X=r_Y$. 
\end{proof}


Next we would like to establish a growth of length description for certain families 
of non-trivial loops in $X$, and that that the loop $\gamma$ in the definition of $\mathcal M(X)$ is 
in fact the shortest non-trivial loop in $X$. 

\begin{prop}\label{increase}
Let $\phi(z)=\frac{z+r}{1+rz}$ and fix $\theta\in (0,\pi)\cup (\pi,2\pi)$. For $s\in [0,1]$ set 
$$
\eta(s):=d_K(se^{i\theta},\phi(se^{i\theta})).
$$
Then $\eta(s)$ is strictly increasing in $s$ and $\lim_{s\rightarrow 1} \eta(s)=\infty$.
\end{prop}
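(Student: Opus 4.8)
The plan is to reduce everything to one elementary computation with the closed form of the Poincar\'e distance. Recall that on $\triangle$ one has $d_K(z,w)=\log\frac{1+\rho(z,w)}{1-\rho(z,w)}$, where $\rho(z,w)=\big|\frac{z-w}{1-\overline z w}\big|\in[0,1)$ is the pseudo-hyperbolic distance; since $\rho\mapsto\log\frac{1+\rho}{1-\rho}$ is a strictly increasing bijection of $[0,1)$ onto $[0,\infty)$, it suffices to prove that $s\mapsto\rho\big(se^{i\theta},\phi(se^{i\theta})\big)$ is strictly increasing on $[0,1)$ and tends to $1$ as $s\to1$.

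First I would compute $\rho(z,\phi(z))$ for an arbitrary $z\in\triangle$. A direct computation gives
$$
z-\phi(z)=\frac{r(z^2-1)}{1+rz},\qquad 1-\overline z\,\phi(z)=\frac{(1-|z|^2)+2ir\,\mathrm{Im}\,z}{1+rz},
$$
so the factors $1+rz$ cancel and $\rho(z,\phi(z))^2=\dfrac{r^2\,|z^2-1|^2}{(1-|z|^2)^2+4r^2(\mathrm{Im}\,z)^2}$. Putting $z=se^{i\theta}$ and $t:=s^2\in[0,1)$, and using $\cos2\theta=1-2\sin^2\theta$ to write $|z^2-1|^2=t^2-2t\cos2\theta+1=(1-t)^2+4t\sin^2\theta$, this becomes
$$
\rho\big(se^{i\theta},\phi(se^{i\theta})\big)^2=F(t):=\frac{r^2\big[(1-t)^2+4t\sin^2\theta\big]}{(1-t)^2+4r^2 t\sin^2\theta}.
$$

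The remaining step is to analyze $F$ on $[0,1)$. Writing $A(t)=(1-t)^2$ and $B(t)=4t\sin^2\theta$ — and noting $B(t)>0$ for $t\in(0,1)$, which is exactly where the hypothesis $\theta\notin\{0,\pi\}$ enters — one rewrites
$$
F(t)=r^2+r^2(1-r^2)\,\frac{B(t)}{A(t)+r^2B(t)}=r^2+\frac{r^2(1-r^2)}{A(t)/B(t)+r^2}\qquad(0<t<1).
$$
Since $0<r<1$, the function $F$ is strictly increasing on $(0,1)$ if and only if $A/B$ is strictly decreasing there; and $A(t)/B(t)=\frac{1}{4\sin^2\theta}\cdot\frac{(1-t)^2}{t}$ has derivative $-\frac{1-t^2}{4t^2\sin^2\theta}<0$. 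Together with $F(0)=r^2=\lim_{t\to0^+}F(t)$ this shows $F$ is strictly increasing on $[0,1)$, hence $\eta$ is strictly increasing. Finally, as $t\to1$ both numerator and denominator of $F(t)$ tend to $4r^2\sin^2\theta$, so $F(t)\to1$, i.e.\ $\rho(\cdot)\to1$ and $\eta(s)\to\infty$.

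I do not expect a serious obstacle: the whole argument is a single explicit computation, and the only points requiring care are organizing the algebra so that the monotonicity of $F$ is transparent, and keeping track of the hypothesis $\sin\theta\neq0$ — when $\theta\in\{0,\pi\}$ the ray $\{se^{i\theta}\}$ is the $\phi$-invariant geodesic $\mathbb R\cap\triangle$ and $\eta$ is the constant $\log\frac{1+r}{1-r}$, the displacement length of $\phi$ along its axis, so the hypothesis is genuinely needed. As an alternative one could instead invoke the standard displacement formula $\sinh\big(\tfrac12 d_K(z,\phi z)\big)=\cosh\big(d_K(z,\mathbb R\cap\triangle)\big)\cdot\sinh\big(\tfrac12\ell\big)$ for a hyperbolic isometry with axis $\mathbb R\cap\triangle$ and translation length $\ell=\log\frac{1+r}{1-r}$, combined with the elementary fact that $d_K(se^{i\theta},\mathbb R\cap\triangle)$ increases from $0$ to $\infty$ as $s$ runs from $0$ to $1$; the computation above is preferred here because it is self-contained.
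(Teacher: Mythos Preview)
Your proof is correct and follows essentially the same route as the paper: both reduce to showing that the squared pseudo-hyperbolic (M\"obius) distance $F(t)=r^2\dfrac{(1-t)^2+4t\sin^2\theta}{(1-t)^2+4r^2t\sin^2\theta}$ (with $t=s^2$) is strictly increasing on $[0,1)$ and tends to $1$. The only difference is cosmetic---you establish monotonicity by observing that $A/B=(1-t)^2/(4t\sin^2\theta)$ is decreasing, whereas the paper differentiates $x\mapsto x/((1-x)^2+\alpha x)$ directly; your organization of the algebra is arguably a bit tidier.
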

\begin{proof}
Letting $d_M(\cdot,\cdot)$ denote the M\"{o}bius distance, we may prove that 
$$
\lim_{s\rightarrow 1} = \tau(s) := d_M^2(se^{i\theta},\phi(se^{i\theta}))=1,
$$
and that $\tau$ is an increasing function of $s$.
We have that 
$$
\phi(se^{i\theta}) = \frac{se^{i\theta} + r}{1+rse^{i\theta}}, 
$$
and further we get that
\begin{align*}
d_M(se^{i\theta},\phi(se^{i\theta})) & = |\frac{\frac{se^{i\theta} + r}{1+rse^{i\theta}} - se^{i\theta}}{1 - se^{-i\theta} \frac{se^{i\theta} + r}{1+rse^{i\theta}}}|\\
& = |\frac{se^{i\theta} + r - se^{i\theta}(1+rse^{i\theta})}{(1+rse^{i\theta}) - se^{-i\theta}(se^{i\theta} + r)}| \\
& = |\frac{r(1-s^2e^{2i\theta})}{1 - s^2 + rs(e^{i\theta} - e^{-i\theta})}|, 
\end{align*}
so we see that 
$$
\lim_{s\rightarrow 1} d_M(se^{i\theta},\phi(se^{i\theta}))  = |\frac{(1-e^{2i\theta})}{e^{i\theta}-e^{-i\theta}}| = 1. 
$$
Further we have that 
\begin{align*}
\tau(s) & = r^2\frac{(1-s^2\cos(2\theta))^2 + s^4\sin^2(2\theta)}{(1-s^2)^2 + 4r^2s^2\sin^2(\theta) }\\
& = r^2\frac{1 - 2s^2\cos(2\theta) + s^4\cos^2(2\theta) + s^4\sin^2(2\theta)}{(1-s^2)^2 + 4r^2s^2\sin^2(\theta)}\\
& = r^2\frac{1 - 2s^2(\cos^2(\theta)-\sin^2(\theta)) + s^4}{(1-s^2)^2 + 4r^2s^2\sin^2(\theta)}\\
& = r^2\frac{1 - 2s^2(1-2\sin^2(\theta)) + s^4}{(1-s^2)^2 + 4r^2s^2\sin^2(\theta)}\\
& =  r^2\frac{1 - 2s^2 + 4s^2\sin^2(\theta)) + s^4}{(1-s^2)^2 + 4r^2s^2\sin^2(\theta)}\\
& =  r^2\frac{(1 - s^2)^2 + 4s^2\sin^2(\theta))}{(1-s^2)^2 + 4r^2s^2\sin^2(\theta)}\\
& = r^2 [1 + \frac{4s^2\sin^2\theta - 4r^2s^2\sin^2\theta}{(1-s^2)^2 + 4r^2s^2\sin^2(\theta)}]\\
& = r^2 [1 + \frac{s^2\cdot (4(1-r^2)\sin^2\theta)}{(1-s^2)^2 + 4r^2s^2\sin^2(\theta)}]
\end{align*}
So $\tau(s)$ is strictly increasing in $s$ if the function 
$$
f(x)= \frac{x}{(1-x)^2 + x\alpha}
$$
is strictly increasing for $\alpha>0$. Computing the nominator $N(f'(x))$ we see that 
$$
N(f'(x)) = (1-x)^2 + x\alpha - x(2(1-x)(-1) + \alpha) = (1-x)^2 + 2x(1-x)
$$
which is strictly positive for $0\leq x <1$.
\end{proof}

\begin{corollary}
For an annulus $X$ we have that $\mathcal M(X)$ is the Kobayashi length of the shortest non-trivial loop in $X$.
\end{corollary}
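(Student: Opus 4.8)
The plan is to reduce the assertion about loops in $X$ to a statement about how far the cyclic group $\langle\varphi\rangle$ moves points of $\triangle$, and then to read off the required inequality from Proposition~\ref{increase}. Write $X=\triangle/\langle\varphi\rangle$ with $\varphi(z)=\frac{z+r}{1+rz}$ and invariant geodesic $\gamma=\mathbb R\cap\triangle$, and let $\pi:\triangle\rightarrow X$ be the universal covering; since the Kobayashi metric of $X$ is the push-forward of the Poincar\'e metric under $\pi$, the map $\pi$ is a local isometry, and so the Kobayashi length of a loop in $X$ equals the Poincar\'e length of any lift of it to $\triangle$. First I would note that a non-trivial loop $\sigma$ in $X$ lifts to a path in $\triangle$ from some $z_0$ to $\psi(z_0)$ for a uniquely determined $\psi\in\langle\varphi\rangle$, and that non-triviality of $\sigma$ forces $\psi\neq\mathrm{id}$, i.e.\ $\psi=\varphi^n$ with $n\in\mathbb Z\setminus\{0\}$; reversing $\sigma$ if necessary we may take $n\geq 1$. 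Hence $l_K(\sigma)\geq d_K(z_0,\varphi^n(z_0))$, and it suffices to prove that $d_K(z,\varphi^n(z))\geq\mathcal M(X)$ for all $z\in\triangle$ and all $n\geq 1$, with equality attained for some loop.

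For the lower bound I would establish the stronger inequality $d_K(z,\varphi^n(z))\geq n\,\mathcal M(X)$. The iterate $\varphi^n$ is again hyperbolic, with attracting fixed point $1$ and the same invariant geodesic $\gamma$, hence of the form $\varphi^n(z)=\frac{z+r_n}{1+r_n z}$ as in \eqref{rform}; composing \eqref{rform} with itself $n$ times (equivalently, using the addition law for $\tanh^{-1}$) gives $\frac{1}{2}\log\frac{1+r_n}{1-r_n}=n\,\mathcal M(X)$, so in particular $d_K(0,\varphi^n(0))=d_K(0,r_n)=n\,\mathcal M(X)$. Now fix $z\in\triangle$. If $z=se^{i\theta}$ with $e^{i\theta}\notin\{1,-1\}$, then Proposition~\ref{increase} applied to $\varphi^n$ says that $s\mapsto d_K(se^{i\theta},\varphi^n(se^{i\theta}))$ is increasing, so $d_K(z,\varphi^n(z))\geq d_K(0,\varphi^n(0))=n\,\mathcal M(X)$. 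If instead $z=t\in\gamma=(-1,1)$, then $\varphi^n(t)$ is real and $\tanh^{-1}(\varphi^n(t))=\tanh^{-1}(t)+\tanh^{-1}(r_n)$, so $d_K(t,\varphi^n(t))=\tanh^{-1}(r_n)=n\,\mathcal M(X)$ (that is, $\varphi^n$ acts on $\gamma$ as a translation by $n\,\mathcal M(X)$). In all cases $d_K(z,\varphi^n(z))\geq n\,\mathcal M(X)\geq\mathcal M(X)$, so every non-trivial loop in $X$ has Kobayashi length at least $\mathcal M(X)$.

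It remains to exhibit a non-trivial loop of length exactly $\mathcal M(X)$: the image $\bar\gamma=\pi(\gamma)$ of the invariant geodesic represents a generator of $\pi_1(X)$, and since $\varphi$ translates $\gamma$ by $\mathcal M(X)$ and $\pi$ is a local isometry, a single fundamental arc of $\gamma$ maps onto $\bar\gamma$ and $l_K(\bar\gamma)=\mathcal M(X)$, which is precisely the identity $\mathcal M(X)=l_K(\gamma)$ used in the definition. Therefore $\bar\gamma$ is a shortest non-trivial loop in $X$, and its length is $\mathcal M(X)$.

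I do not expect a serious obstacle: Proposition~\ref{increase} does essentially all the work. The two points requiring some care are (i) the bookkeeping identifying every non-trivial free homotopy class with an iterate $\varphi^n$, $n\geq 1$, and identifying $\varphi^n$ with a map of the form \eqref{rform}, so that Proposition~\ref{increase}, which is stated only for the generator, may legitimately be applied to $\varphi^n$; and (ii) the fact, implicit throughout the paper, that the Kobayashi metric on $\triangle/\langle\varphi\rangle$ coincides with the push-forward of the Poincar\'e metric, which is what permits the length computation to be carried out upstairs in $\triangle$.
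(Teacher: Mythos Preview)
Your proof is correct and follows essentially the same approach as the paper: lift a non-trivial loop to an arc in $\triangle$ joining $z_0$ to $\varphi^n(z_0)$, observe that $\varphi^n$ again has the normal form \eqref{rform} so that Proposition~\ref{increase} applies, treat the off-axis and on-axis cases separately, and exhibit $\pi(\gamma)$ as a loop realising the infimum. The only cosmetic difference is that you make the stronger inequality $d_K(z,\varphi^n(z))\geq n\,\mathcal M(X)$ explicit via the addition law for $\tanh^{-1}$, whereas the paper records only $|r'|\geq |r|$ with equality iff $n=\pm 1$; the content is the same.
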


\begin{proof}
Choose the universal covering map $\pi:\triangle\rightarrow X$ such that Deck($X$) is generated by 
$$
\varphi(z)=\frac{z+r}{1+rz}.
$$
Let $\gamma:[0,1]\rightarrow X$ be a continuous map with $\gamma(0)=\gamma(1)=p$, 
and let $\tilde\gamma:[0,1]\rightarrow\triangle$ be a lifting of $\gamma$. Assuming that $\gamma$
is a candidate for a shortest non-trivial loop, we may assume that $\tilde\gamma$ is a geodesic
arc in $\triangle$ connecting $\tilde\gamma(0)$ and $\tilde\gamma(1)$. Assume first that $\tilde\gamma(0)\notin\mathbb R$
(in which case $\tilde\gamma(1)\notin\mathbb R$). Write $se^{i\theta}$, and fix $n\in\mathbb Z$ such that $\tilde\gamma(1)=\varphi^{n}(se^{i\theta})$.
Then 
$$
\varphi^{n}(z)=\frac{z+r'}{1+r'z}
$$
for some $r'$ with $|r'|\geq |r|$, and applying the proposition with $\varphi^n$, we see that the Kobayashi length of $\tilde\gamma$
is strictly longer than the segment between $0$ and $\varphi^n(0)$, which in turn has length $\mathcal M(X)$ if and only if $n=\pm 1$. \

In the remaining case, if $\tilde\gamma(0)\in\mathbb R$ then $\tilde\gamma\subset\mathbb R$ and $\tilde\gamma(1)=\varphi^n(\tilde\gamma(0))$
for $n\in\mathbb Z$. By the minimality assumption we have that $n=\pm 1$, in which case $\tilde\gamma$ has the same Kobayashi length 
as the line segment between $0$ and $\varphi(0)$. 

\end{proof}

\subsection{Proof of Theorem \ref{annuli}}
We start by providing a lemma. Recall that $D\subset\triangle^2$ denotes the diagonal $\{(z,z)\}$.
For any point $p\in D$ and $\delta>0$ we let $D_p(\delta)$ denote the set 
$B_\delta(p)\cap D$, where $B_\delta(p)$ denotes the ball of radius $\delta$
centred at $p$ in $\triangle^2$.

\begin{lemma}\label{diag}
Let $\Gamma\subset\mathrm{Aut_{hol}}(\triangle^2)$ be a Fuchsian group, 
assume that $\phi=(\varphi_1,\varphi_2)\in\Gamma$, with $\varphi_1(0)=\varphi_2(0)$, 
and $\varphi_1\neq\varphi_2$, and consider the quotient 
$\pi:\triangle^2\rightarrow X:=\triangle^2/\Gamma$. Then $\pi(D)$ is a singular
curve in $X$, and there exists a $\delta>0$ such that $\pi(D_\delta(0))$ and 
$\pi(D_\delta(\phi(0)))$ are distinct (locally) irreducible components of $\pi(D)$.
\end{lemma}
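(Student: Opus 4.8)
The plan is to analyze the image $\pi(D)$ near the two points $\pi(0)$ and $\pi(\phi(0))$ (which coincide in $X$) and show these two local branches are genuinely different. First I would observe that $D$ is a holomorphic disk in $\triangle^2$, hence $\pi(D)$ is the image of a holomorphic disk under a covering map, so locally at any point $\pi(D)$ is a finite union of holomorphic disks; it is a singular (i.e. not a submanifold) curve at $\pi(0)$ precisely because at least two distinct branches pass through that point. To see that there are at least two branches through $\pi(0)$, note that $\phi(0)\in D$ as well, since $\phi(0)=(\varphi_1(0),\varphi_2(0))$ and $\varphi_1(0)=\varphi_2(0)$ by hypothesis. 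So both $0$ and $\phi(0)$ lie on $D$ and are identified by $\pi$, and I would let $D_\delta(0)$ and $D_\delta(\phi(0))$ be the corresponding small pieces of $D$.

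The key step is to rule out that these two pieces map to the same local branch of $\pi(D)$ at $\pi(0)$. Suppose for contradiction that for all $\delta>0$ the germs $\pi(D_\delta(0))$ and $\pi(D_\delta(\phi(0)))$ coincide. Lifting back up via the covering, this forces the existence of some $\psi\in\Gamma$ carrying a neighborhood of $0$ in $D$ into $D$ itself, with $\psi(0)=\phi(0)$. Writing $\psi=(\psi_1,\psi_2)$, the condition that $\psi$ maps a piece of the diagonal $\{(z,z)\}$ into the diagonal means $\psi_1(z)=\psi_2(z)$ for $z$ in a neighborhood of $0$ in $\triangle$, hence $\psi_1=\psi_2$ identically by the identity theorem. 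But then consider $\phi\circ\psi^{-1}\in\Gamma$: it fixes $0\in\triangle^2$ (since $\psi(0)=\phi(0)$), so by proper discontinuity of the Fuchsian group $\Gamma$ we must have $\phi\circ\psi^{-1}=\mathrm{id}$, i.e. $\phi=\psi$, forcing $\varphi_1=\psi_1=\psi_2=\varphi_2$, contradicting the hypothesis $\varphi_1\neq\varphi_2$.

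Hence for some $\delta>0$ the germs $\pi(D_\delta(0))$ and $\pi(D_\delta(\phi(0)))$ are distinct, and after possibly shrinking $\delta$ they are distinct locally irreducible components of $\pi(D)$ at $\pi(0)=\pi(\phi(0))$; in particular $\pi(D)$ is singular there. The main obstacle, and the point needing the most care, is the bookkeeping with the covering map: translating the set-theoretic statement ``the two image germs agree'' into the existence of a single deck transformation $\psi$ with $\psi(0)=\phi(0)$ and $\psi(D_\delta(0))\subset D$. This requires knowing that a germ of $\pi(D)$ pulls back, under $\pi$, to a finite union of $\Gamma$-translates of germs of $D$, and that equality of the image germs implies one of those translates matches; this is standard for covering maps but should be spelled out. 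Everything else — that $\psi_1=\psi_2$ on a neighborhood forces it identically, and that a deck transformation fixing a point is the identity — is immediate from the identity theorem and proper discontinuity respectively.
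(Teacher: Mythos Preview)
Your proposal is correct and follows essentially the same idea as the paper: both reduce the question to whether $\phi$ carries a neighbourhood of $0$ on the diagonal back into the diagonal, i.e.\ whether $\varphi_1(z)=\varphi_2(z)$ holds on a neighbourhood of $0$, which is impossible since $\varphi_1\neq\varphi_2$. The paper phrases this directly (the set $\{z:\varphi_1(z)=\varphi_2(z)\}$ has $0$ as an isolated point), whereas you argue by contradiction and route through an auxiliary deck transformation $\psi$ --- which, as you yourself then show, is forced to equal $\phi$ by freeness --- so your detour is harmless but unnecessary.
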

\begin{proof}
Set $q=\phi(0)$, and 
choose $\delta$ sufficiently small such that $\pi$ is injective on $B_\delta(0)$ and $B_\delta(q)$.
Then $\pi(D_\delta(0))$ and $\pi(D_\delta(q))$ are smooth subsets of $\pi(D)$ and they 
intersect at the point $\pi(0)=\pi(q)$. Consider the set 
$$
Z=\{(z,z)\in D_\delta(0):\pi(z,z)\subset\pi|_{D_\delta(q)}(D_\delta(q))\}=\{(z,z)\in D_\delta(0):\phi(z,z)\subset D_\delta(q)\}.
$$
Then $(z,z)\in Z$ if and only if the equation $\varphi_1(z)=\varphi_2(z)$ is satisfied, but since $\varphi_1\neq\varphi_2$
we have that $0$ is an isolated point satisfying this equation, and the conclusion 
of the lemma follows after possibly having to decrease $\delta$. 
\end{proof}

Suppose first that $Y_1$ is not conformally equivalent to $Y_2$. According to Proposition \ref{modulus} we have that 
$\mathcal M(Y_1)\neq\mathcal M(Y_2)$, and so without loss of generality we assume that $\mathcal M(Y_1)<\mathcal M(Y_2)$.
Let $\pi_j:\triangle\rightarrow Y_j$ be universal covering maps for $j=1,2$, and let $\phi_j$ be generators for the 
corresponding Deck-groups. After conjugating $\phi_j$ for $j=1,2$, and possibly taking inverses, we may assume that 
\begin{equation}\label{form}
\phi_j(z)=\frac{z+r_j}{1+r_jz}.
\end{equation}
(Note that conjugating groups corresponds to changing the base points for $\pi_j$ and a choice of directional 
derivative for the universal covering map.)
We then have that $r_1<r_2$. By Lemma \ref{increase} there exists 
a point $z\in\triangle$ such that $d_K(z,\phi_1(z))=\mathcal M(Y_2)$.  \

Now let $C_1$ denote the straight line segment between -1 and 1, and let $C_2$ be 
the geodesic in $\triangle$ that contains $z$ and $\phi_1(z)$. Then there is a (unique)
M\"{o}bius transformation $\psi$ that maps $C_1$ onto $C_2$, and with $\psi(0)=z, \psi(r_2)=\phi_1(z)$.
Set $\tilde\phi_1=\psi^{-1}\circ\phi_1\circ\psi$. Then $\tilde\phi_1(0)=r_2=\phi_2(0)$. 
However, note that $\tilde\phi_1\neq\phi_2$, since $C_2$ is not an invariant geodesic for $\phi_1$, which implies 
that $C_1$ is not an invariant geodesic for $\tilde\phi_1$.  \

We now consider the universal covering of $X$ given by 
$$
\pi:\triangle^2\rightarrow \triangle^2/\langle \tilde\phi_1(z_1),\phi_2(z_2)\rangle.
$$
Since $\tilde\phi_1(0)=\phi_2(0)=r_2$ it follows from Lemma \ref{diag} that the 
diagonal $D\subset\triangle\times\triangle$ is mapped onto a singular 
locally reducible curve in $X$. Moreover, setting $q=(\tilde\phi_1(0),\phi_2(0))$, near the point $\pi(0)=\pi(q)$
we have that $\pi(D_\delta(0))$ and $\pi(D_\delta(q))$ (see notation from Lemma \ref{diag}) are
(local) irreducible components of $\pi(D)$.  
This concludes the proof in the case that $\mathcal M(Y_1)\neq\mathcal M(Y_2)$ by an application 
of Proposition \ref{prop:genstrat}.

\medskip

In the remaining case, after an initial conjugation, we may assume that $\phi_1=\phi_2$, and that they 
are both on the form \eqref{form}. We may then conjugate $\phi_1$ as before to obtain an element 
$\tilde\phi_1$ such that $\tilde\phi_1(0)=\phi_2^2(0)$, while $\tilde\phi_1\neq\phi_2$. Again, the conclusion is 
that the diagonal $D$ decends to a singular curve in the quotient $X$.

\subsection{The case where $Y_1$ is an annulus and $Y_2$ is the punctured disk}

In this case we have generators $\phi_1$ and $\phi_2$ where 
$\phi_1$ is hyperbolic and $\phi_2$ is parabolic. Then 
for any $0<r<\infty$ we have that there exists a point $z\in\triangle$
such that $d_M(z,\phi_2(z))=r$. Pick a point 
$z_0$ such that $d_M(z_0,\phi_2(z_0))=d_M(0,\phi_1(0))$. Choose 
a map $\gamma$ such that $\gamma(0)=z_0$ and 
set $\tilde\phi_2(z)=\gamma^{-1}(\phi_2(\gamma(z)))$. Then 
$|\phi_1(0)|=|\tilde\phi_2(0)|$, and so after another 
conjugation we may assume that $\phi_1(0)=\tilde\phi_2(0)$. 
Since $\phi_1$ is hyperbolic 
and $\tilde\phi_2$ is parabolic, we have that $\phi_1\neq\tilde\phi_2$, and 
so the proof is concluded as in the previous case.

\subsection{The case where the $Y_j$'s are both the punctured disk}{}\

The punctured disk is the quotient of the upper half plane by a cyclic group 
generated by a parabolic element. Any such element is conjugate 
to an element $z\mapsto z \pm 1$, and so we may initially assume that 
$\phi_1(z)=z+1$ and  $\phi_2(z)=z-1$ (if necessary we may also use inverses). 

\begin{lemma}
Set 
$$
\psi(z)=\frac{(7/5)z-(1/5)}{(4/5)z + (3/5)}.
$$
Then $\psi$ is conjugate to $\phi_2$.
\end{lemma}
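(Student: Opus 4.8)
The claim is that the map
$$
\psi(z)=\frac{(7/5)z-(1/5)}{(4/5)z + (3/5)}
$$
is conjugate (within $\mathrm{Aut_{hol}}(H)$) to $\phi_2(z)=z-1$. First I would clear denominators and present $\psi$ by the integer matrix $A=\begin{pmatrix} 7 & -1\\ 4 & 3\end{pmatrix}$, checking $\det A = 21+4 = 25$, so that $\tfrac{1}{5}A\in SL_2(\mathbb R)$ represents $\psi$; in particular $\psi$ is a genuine automorphism of $H$. The trace of $\tfrac15 A$ is $\tfrac{7+3}{5}=2$, so by the criterion \eqref{normalform} recalled in the excerpt ($a+d=\pm 2 \Leftrightarrow$ parabolic), $\psi$ is parabolic. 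Since any parabolic automorphism of $H$ is conjugate (after possibly passing to the inverse) to $z\mapsto z\pm 1$, and $\phi_2(z)=z-1$, it remains only to pin down the sign, i.e.\ to rule out that $\psi$ is conjugate to $z\mapsto z+1=\phi_1$ rather than to $\phi_2$.

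\textbf{Fixing the sign.} The clean way is to exhibit an explicit conjugating element. The unique fixed point of $\psi$ on $\overline H$ is the fixed point of the M\"obius transformation: solving $z(\tfrac45 z+\tfrac35)=\tfrac75 z-\tfrac15$, i.e.\ $4z^2 -4z +1 = 0$, gives the double root $z_0=\tfrac12\in\mathbb R=bH$, as it must for a parabolic element. Pick $g\in\mathrm{Aut_{hol}}(H)$ with $g(\infty)=\tfrac12$ (for instance $g(z)= \tfrac12 - \tfrac1z$, which maps $H$ to $H$ and $\infty\mapsto \tfrac12$), and compute $g^{-1}\circ\psi\circ g$. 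This is again parabolic with fixed point $\infty$, hence of the form $z\mapsto z+c$ for some real $c\neq 0$; the sign of $c$ is then read off directly from the computation, and one checks it is negative, so $g^{-1}\circ\psi\circ g = \phi_2$ after an additional real dilation $z\mapsto |c|z$ (which fixes $\infty$ and commutes appropriately). Alternatively, and perhaps more transparently, one can just directly search for $h(z)=\tfrac{az+b}{cz+d}$ with $h(\infty)=\tfrac12$ solving $h^{-1}\psi h(z)=z-1$; writing $\psi = h\circ\phi_2\circ h^{-1}$ and matching matrices, $\tfrac15 A = \pm\, h\begin{pmatrix}1&-1\\0&1\end{pmatrix}h^{-1}$, which is a small linear system for the entries of $h$.

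\textbf{Expected main obstacle.} The only subtlety is the sign/orientation bookkeeping: showing $\psi$ is parabolic and conjugate to \emph{some} $z\mapsto z\pm1$ is immediate from the trace computation, but to assert it is conjugate to $\phi_2$ \emph{specifically} (as the subsequent argument presumably needs, to build a hyperbolic element from $\phi_1$ and a conjugate of $\phi_2$ while keeping the groups a Fuchsian group acting correctly on $H$) one must produce the conjugator and verify the sign of the resulting translation. Everything else is a two-by-two matrix computation. I would organize the write-up as: (i) pass to matrices and record $\det=25$, $\mathrm{tr}=2$; (ii) conclude $\psi$ is parabolic with fixed point $z_0=\tfrac12$; (iii) exhibit $h$ (or $g$) explicitly and compute $h^{-1}\psi h = \phi_2$, checking the translation is by $-1$ after the harmless real rescaling.
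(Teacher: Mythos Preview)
Your proposal is correct and is essentially the paper's own argument: the paper notes the fixed point $\psi(1/2)=1/2$, conjugates by $\gamma(z)=\frac{z-2}{2z}=\tfrac12-\tfrac1z$ (which is exactly your suggested $g$) to obtain $\gamma^{-1}\psi\gamma(z)=z-\tfrac45$, and then rescales by $z\mapsto\tfrac45 z$ to reach $\phi_2$. Your additional framing via the trace criterion is a nice conceptual overlay, but the conjugator and the verification are identical.
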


\begin{proof}
We have  $\psi(1/2)=1/2$.  Set $\gamma(z)=\frac{z-2}{2z}$ and $\gamma^{-1}(z)=\frac{-1}{z-(1/2)}$.

\begin{align*}
\gamma^{-1}(\psi(\gamma(z))) & = \frac{-1}{ \frac{7(\frac{z-2}{2z}) - 1}{4(\frac{z-2}{2z}) + 3} - (1/2)}\\
& = \frac{-1}{ \frac{(\frac{7z-14}{2z}) - \frac{2z}{2z}}{(\frac{4z-8}{2z}) + \frac{6z}{2z}} - (1/2)}\\
& = \frac{-1}{ \frac{(\frac{5z-14}{2z})}{(\frac{10z-8}{2z})} - (1/2)}\\
& = \frac{-1}{\frac{5z - 14}{10z -8} - (1/2)}\\
& = \frac{-10z + 8}{5z - 14 - 5z + 4}\\
& = z-4/5.
\end{align*}
So $(5/4)\gamma^{-1}(\psi(\gamma((4/5)z)))=z-1$. 
\end{proof}

By the lemma, after conjugation we may assume that the two 
groups are generated by $\phi_1$ and $\psi$. Then $\phi_1(i)=\psi(i)$ while 
$\phi_1\neq\psi$.  So the proof is concluded as in the first case. 

\subsection{Proof of Theorem \ref{thm:bidisk3}}

We will consider how the extremal curves in $\triangle^2$ decend to $X$. 
So let $(\alpha,\beta)\in \triangle^2$ be an arbitrary point, and let 
$v\in\mathbb C^2\setminus\{0\}$. Note first that if $v$ is vertical 
then the vertical line through $(\alpha,\beta)$ is mapped injectively 
into $X$, so we have an injective extremal curve. 
Next we assume that $v$ is not vertical and not horizontal. 
By conjugating 
$\Gamma$ we may assume that $\alpha=0$.  
Now let $\psi$ be an automorphism of $\triangle$ such that $\psi(\beta)=0$, 
set $F(z_1,z_2):=(z_1,\psi(z_2))$, and set $\tilde v:=F_*(0,\beta)(v)$.
Then there is an extremal map $(\xi,\lambda\xi)$ through the origin 
in the direction $\tilde v$, and so the curve 
$(\xi,\psi^{-1}(\lambda\xi))$ is extremal in the direction $v$ at $(0,\beta)$.
Now assume two points on this extremal curve are identified by $\Gamma$, i.e., 
there is a point $(\xi,\psi^{-1}(\lambda\xi))$ and an element $\varphi\in\Gamma$
such that $(\varphi(\xi),\psi^{-1}(\lambda\xi))$ equals
$(\varphi(\xi),\psi^{-1}(\lambda\varphi(\xi)))$. Then $\lambda\xi=\lambda\varphi(\xi)$, 
but then $\varphi=\mathrm{id}$ since $\Gamma$ is fixed point free. \

It remains to consider horizontal directions, and this is done in the same way as 
the last part of the proof of Theorem \ref{2fuchsian}.

\section{Examples}\label{sec:ex}

\subsection{The case of dimension one}
In complex dimension one we have that the 
two metrics are the same on $\triangle, \mathbb C$ and 
$\mathbb P^1$. On $\mathbb C^*$ they are different 
because of the Koebe-$\frac{1}{4}$ theorem, which 
also gives that they are different on any torus $T$. On all other 
Riemann surfaces they are different. 

\subsection{Some easy cases in dimension two}

In complex dimension two we have that the two metrics 
coincide on $\mathbb C^2, \mathbb C^*\times\mathbb C, \mathbb C^*\times\mathbb C^*, \mathbb C^*\times\mathbb P^1, \mathbb C^*\times\mathbb P^1$ and $\mathbb P^1\times\mathbb P^1$. It is unknown if they agree on $T\times\mathbb P^1$ where 
$T$ is a torus. The metrics also always coincide on any manifold $X$ with the Density Property, since
for any point $x\in X$ there is a Fatou-Bieberbach domain $\Omega\subset X$ with $x\in \Omega$.

\subsection{The case of dimension greater than two}

If $X$ is a complex manifold of dimension $\mathrm{dim}(X)\geq 3$ we have that $\omega^X_K=\omega^X_\iota$
due to transversality. 

\subsection{Convex domains}

It is a consequence of Lempert's theory that the two metrics always coincide on a bounded strictly convex domain 
$\Omega\subset\mathbb C^2$ with boundary of class $C^3$ (\cite{Lempert1}, \cite{Lempert2}).

\subsection{The symmetrized bi-disk}

The symmetrized bi-disk $\mathbb{G}$ gives an example of a non-convex domain for which the two metrics coincide. 
Agler and Young showed that every two points in $\mathbb{G}$ can be joint by a unique complex geodesic for $\omega^{\mathbb{G}}_K$ that has a left inverse (\cite{AglerYoung} ). This example we might generalise as follows:

\begin{prop}
Let $D\subset\mathbb C^n$ be a bounded taut domain such that for any two points $z_1,z_2\in\Omega$
there exist a holomorphic map $\varphi:\triangle\rightarrow\Omega$ with $z_1,z_2\in\varphi(\triangle)$, 
and a holomorphic map $\psi:D\rightarrow\triangle$ such that $\psi(\varphi(\zeta))=\zeta$ for all $\zeta\in\triangle$. 
Then $\omega^D_\iota=\omega^D_K$.
\end{prop}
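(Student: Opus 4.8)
The plan is to prove the two inequalities separately. Since every injective map is a map, $\omega^D_\iota\ge\omega^D_K$ is immediate from the definitions, so the whole content is the reverse inequality: for each $(x,v)\in TD$ with $v\ne 0$ I must produce an \emph{injective} holomorphic disk realizing $\omega^D_K(x,v)$, that is, $\varphi\in\cO_\iota(\triangle,D)$ with $\varphi(0)=x$, $d\varphi(0)(1)=\lambda v$ for some $\lambda>0$, and $1/\lambda=\omega^D_K(x,v)$. (For $v=0$ both metrics vanish.) The first observation is that the hypothesis, although stated for pairs of points, secretly produces \emph{complex geodesics which are holomorphic retracts}: if $\varphi:\triangle\to D$ has a left inverse $\psi:D\to\triangle$, then for all $\zeta,\zeta'\in\triangle$ one has $d_K(\zeta,\zeta')=d_K(\psi\varphi\zeta,\psi\varphi\zeta')\le d_K(\varphi\zeta,\varphi\zeta')\le d_K(\zeta,\zeta')$ (Kobayashi distances on $\triangle$ and $D$), so $\varphi$ is distance preserving between any two of its points; in particular such a $\varphi$ is automatically injective.

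Fix $(x,v)$ with $v\ne 0$, and for large $n$ apply the hypothesis to the pair $x,\ x+\tfrac1n v\in D$, obtaining $\varphi_n:\triangle\to D$ through both points together with a left inverse $\psi_n$. Pre-composing $\varphi_n$ with an automorphism of $\triangle$ (and $\psi_n$ with its inverse) I may assume $\varphi_n(0)=x$, whence automatically $\psi_n(x)=0$; let $\varphi_n(a_n)=x+\tfrac1n v$. By the geodesic property, $d_K(0,a_n)=d_K(x,x+\tfrac1n v)$, which tends to $0$ (as $D$ is bounded), and by the standard fact that the Kobayashi distance has the infinitesimal Kobayashi metric as its pointwise derivative one has $d_K(x,x+tv)=t\,\omega^D_K(x,v)+o(t)$; hence $a_n\to 0$ and $|a_n|=\tfrac1n\omega^D_K(x,v)+o(\tfrac1n)$ (note $\omega^D_K(x,v)>0$ since $D$ is hyperbolic). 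A Cauchy estimate bounds the second Taylor coefficients of the $\varphi_n$ uniformly, so $\tfrac1n v=\varphi_n(a_n)-\varphi_n(0)=a_n\varphi_n'(0)+O(|a_n|^2)$, giving $\varphi_n'(0)=\tfrac{1/n}{a_n}v+O(|a_n|)$. Replacing $\varphi_n(\zeta)$ by $\varphi_n(e^{i\arg a_n}\zeta)$ (and $\psi_n$ by $e^{-i\arg a_n}\psi_n$, still a left inverse into $\triangle$ vanishing at $x$) I may assume $\varphi_n'(0)\to w:=v/\omega^D_K(x,v)$ as $n\to\infty$.

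Now pass to the limit. Each $\varphi_n$ is injective (it has a left inverse) with $\varphi_n(0)=x$, so no $\varphi_n$ is compactly divergent, and tautness of $D$ gives a subsequence with $\varphi_n\to\varphi\in\cO(\triangle,D)$ locally uniformly; by Montel's theorem the uniformly bounded maps $\psi_n:D\to\triangle$ have a further subsequence with $\psi_n\to\psi:D\to\overline\triangle$ locally uniformly. Along this subsequence $\psi_n\circ\varphi_n\to\psi\circ\varphi$ locally uniformly on $\triangle$ (because $\varphi(K)$ is a compact subset of $D$ for each $K\subset\subset\triangle$, so $\varphi_n(K)$ lies in a fixed compact subset of $D$ for large $n$), hence $\psi\circ\varphi=\mathrm{id}_\triangle$. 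Therefore $\varphi$ is injective, $\varphi(0)=x$, and $\varphi'(0)=\lim_n\varphi_n'(0)=v/\omega^D_K(x,v)$, i.e.\ $d\varphi(0)(1)=\lambda v$ with $\lambda=1/\omega^D_K(x,v)$ and $1/\lambda=\omega^D_K(x,v)$. Thus $\varphi\in\cO_\iota(\triangle,D)$ realizes $\omega^D_K(x,v)$, so $\omega^D_\iota(x,v)\le\omega^D_K(x,v)$, completing the proof.

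I expect the delicate point to be getting the \emph{direction exactly right}: the hypothesis only controls a disk through the pair $\{x,x+\tfrac1n v\}$, and a priori its tangent at $x$ need bear no relation to $v$; it is the left-inverse/geodesic property that forces $\varphi_n$ to be ``as fast as possible'', pinning down $|\varphi_n'(0)|$ via the distance–derivative asymptotics, while the reparametrization $\zeta\mapsto e^{i\arg a_n}\zeta$ fixes the argument, so that $\varphi_n'(0)$ converges to a positive multiple of $v$. The second technical point is that the left-inverse relation must survive the passage to the limit; this is precisely where tautness of $D$ (rather than mere boundedness) is used, to keep the limit disk inside $D$ instead of letting it run into $\partial D$.
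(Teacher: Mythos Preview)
Your proof is correct and follows essentially the same route as the paper: pick the points $x$ and $x+\tfrac{1}{n}v$, use the hypothesis to obtain complex geodesics $\varphi_n$ with left inverses $\psi_n$, normalise $\varphi_n(0)=x$, and pass to a limit using tautness for the $\varphi_n$ and Montel for the $\psi_n$ so that the relation $\psi\circ\varphi=\mathrm{id}_\triangle$ persists. Your write-up is in fact more complete than the paper's, which simply asserts that $\varphi'(0)=\lambda v$ and $\omega^D_K(z,v)=1/|\lambda|$; you supply the missing justification via the distance-preserving property of retracts, the asymptotics $d_K(x,x+tv)=t\,\omega^D_K(x,v)+o(t)$, and the rotation by $e^{i\arg a_n}$ to align the tangent direction.
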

\begin{proof}
Let $z\in D$ and let $v\in\mathbb C^n$.  Let $z_j=z+(1/j)(v)$, $\varphi_j(0)=z, \varphi_j(\zeta_j)=z_j$, 
and $\psi_j(\varphi_j(\zeta))=\zeta$.  Without loss of generality we may assume that $\varphi_j\rightarrow\varphi:\triangle\rightarrow D, \psi_j\rightarrow\psi: D\rightarrow \triangle$
uniformly on compacta.  Then $\psi(\varphi(\zeta))=\zeta$ for all $\zeta\in\triangle$.   So $\varphi$ is a holomorphic embedding, $\varphi'(0)=\lambda v$ for 
some $\lambda\neq 0$, and $\Omega^G_K(z,v)=1/|\lambda|$.
\end{proof}

%
%

\section{Open problems}

\begin{problem}
Determine if the injective Kobayashi metric vanishes identically 
on $\mathbb P^1\times T_1$ and $T_1\times T_2$ (here $T_j$ are tori). 
\end{problem}

\begin{problem}
Let $R$ be a compact hyperbolic Riemann surface, and let $S$ denote either 
the unit disk $\triangle$, the complex plane $\mathbb C$, the Riemann sphere $\widehat{\mathbb C}$, 
or a torus $T$. Set $X=R\times S$. Do we have $\omega^X_\iota=\omega^X_K$?
\end{problem}

\begin{problem}
Let $X$ be an Oka manifold. Do we have $\omega^X_\iota=\omega^X_K$?
\end{problem}

\bibliographystyle{amsplain}

\end{document}